\documentclass{amsart}
\usepackage[all]{xy}
\usepackage{amsmath,amssymb,amsthm,mathrsfs,bm}

\theoremstyle{definition}
\newtheorem{dfn}{Definition}[section]

\theoremstyle{plain}
\newtheorem{prop}[dfn]{Proposition}
\newtheorem{lem}[dfn]{Lemma}
\newtheorem{thm}[dfn]{Theorem}

\def \O{\mathcal{O}}
\def \Kab{K^{\mathrm{ab}}}
\def \GKab{G_K^{\mathrm{ab}}}

\def \p{\mathfrak{p}}

\def \a{\mathfrak{a}}
\def \b{\mathfrak{b}}

\def \Af{\mathbb{A}_{K,f}}
\def \R{\mathbb{R}}

\def \Q{\mathbb{Q}}
\def \C{\mathbb{C}}
\def \Z{\mathbb{Z}}
\def \N{\mathbb{N}}
\def \BB{\mathbb{B}}
\def \KK{\mathbb{K}}

\def \Prim{\mathrm{Prim}}
\def \Ind{\mathrm{Ind}}

\title{Bost-Connes systems for local fields of characteristic zero}
\author{Takuya Takeishi}
\address{Department of Mathematical Sciences, University of Tokyo}
\email{takeishi@ms.u-tokyo.ac.jp}
\date{}

\begin{document}

\begin{abstract}
The Bost-Connes system is a $C^*$-dynamical system which has a relation with the class field theory. 
The purpose of this paper is to generalize the notion of Bost-Connes systems to local fields of characteristic zero. 
The notable phenomenon in the case of local fields is the absence of the phase transition. 
We also investigate the relation between the global and local ones. 
\end{abstract}

\maketitle

\section{Introduction}

The Bost-Connes system is a $C^*$-dynamical system which describes the relation
between quantum statistical mechanics and the class field theory. This system is first
constructed by Bost and Connes in 1995 (\cite{BC}) for $\Q$. 
After the work of Bost-Connes, there were a lot of attempts to generalize such a system to other fields. 
The notable one is the work of Connes-Marcolli-Ramachandran, which succeeded in generalizing such system for imaginary quadratic fields 
in a complete way in 2005 (\cite{CMR}). 
Also, the requirement of Bost-Connes systems is explicitly axiomatized in that paper. 
For a number field $K$, $\Kab$ denotes the maximal abelian extension of $K$ and $\GKab=G(\Kab/K)$ denotes its Galois group over $K$. 
\begin{dfn}[cf. \cite{CMR}]
For a number field $K$, 
a $C^*$-dynamical system $(A, \sigma_t)$ equipped with an action of $\GKab$ is called a Bost-Connes type system for $K$ 
if it satisfies the following axioms:
\begin{enumerate}
\item There exists a unique KMS${_\beta}$-state of $(A, \sigma_t)$ for $0<\beta \leq 1$. 
\item For $1 < \beta \leq +\infty$, the action of $\GKab$ on the set of extremal KMS$_{\beta}$-states of $(A, \sigma_t)$ is free and transitive. 
\item The partition function of $(A, \sigma_t)$ is $\zeta_K(\beta)$. 
\item There exists a $K$-subalgebra $A_0\subset A$ satisfying the following conditions ($A_0$ is called an {\it arithmetic subalgebra}): 
\begin{enumerate}
\item The subalgebra $A_0 \otimes_K \C$ is dense in $A$. 
\item For any extremal KMS$_{\infty}$-states $\varphi$, we have $\varphi(A_0) = \Kab$. 
\item For any extremal KMS$_{\infty}$-states $\varphi$, $\alpha \in \GKab$ and $a \in A_0$, we have $\varphi(\alpha(a)) = \alpha (\varphi(a))$. 
\end{enumerate}
\end{enumerate}
\end{dfn}

Although the existence of Bost-Connes system for arbitrary $K$ was a longstanding problem, it was completely solved by Ha-Paugam \cite{HP}, 
Laca-Larsen-Neshveyev \cite{LLN} and Yalkinoglu \cite{Y}. 
Ha-Paugam's work is devoted for the explicit construction of $C^*$-dynamical systems. 
Laca-Larsen-Neshveyev classified KMS-states of Ha-Paugam's ones. 
Finally, Yalkinoglu proved the existence of an arithmetic subalgebra. 
In this paper, the terminology of {\it the Bost-Connes systems} indicates the specific $C^*$-dynamical systems constructed by them, 
and it is distinguished by the terminology of {\it the Bost-Connes type system} (for an explicit construction, \cite{Y} is a good reference). 
Note that this distinction is not usual, and we use a similar distinction for the case of local fields. 

The purpose of this paper is to extend this notion to local fields of characteristic zero, and to observe a basic relation between global and local ones. 
For local fields, we adopt almost the same axioms, but we have to change it slightly in order to fit the local case. 
For a local field $K$, the module of $K$ is denoted by $q_K$. 
If $K$ is a finite extension of $\Q_{p}$, then $q_K= p^f$ where $f$ is the inertia degree of $K/\Q_p$. 
For local fields, the appropriate axioms are the following: 
\begin{dfn} \label{loc}
Let $K$ be a local field of characteristic zero. 
Fix an embedding $K \subset \C$ and regard $\Kab$ as a subfield of $\C$. 
A $C^*$-dynamical system $(A, \sigma_t)$ equipped with an action of $\GKab$ is called a Bost-Connes type system for $K$ 
if it satisfies the following axioms:
\begin{enumerate}
\item For $0 < \beta \leq +\infty$, the action of $\GKab$ on the set of extremal KMS$_{\beta}$-states of $(A, \sigma_t)$ is free and transitive. 
\item The partition function of $(A, \sigma_t)$ is $(1-q_K^{-\beta})^{-1}$. 
\item There exists a $K$-subalgebra $A_0\subset A$ satisfying the following conditions ($A_0$ is called an {\it arithmetic subalgebra}): 
\begin{enumerate}
\item The subalgebra $A_0 \otimes_K \C$ is dense in $A$. 
\item For any extremal KMS$_{\infty}$-states $\varphi$, we have $\varphi(A_0) = \Kab$. 
\item For any extremal KMS$_{\infty}$-states $\varphi$, $\alpha \in \GKab$ and $a \in A_0$, we have $\varphi(\alpha(a)) = \alpha (\varphi(a))$. 
\end{enumerate}
\end{enumerate}
\end{dfn}

Compared to the case of number fields, we do not require the axiom of the phase transition, because it conflicts with the condition (2). 
In order to define the notion of an arithmetic subalgebra, we have to fix an unnatural embedding $K\subset \C$. 

The main theorem of this paper is the following: 
\begin{thm} \label{main} 
For any local field $K$ of characteristic zero, there exists a Bost-Connes type system for $K$. 
\end{thm}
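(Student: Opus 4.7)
The plan is to take
\[
A = C(\GKab) \otimes \mathcal{T},
\]
where $\mathcal{T}$ is the Toeplitz algebra generated by an isometry $v$ (so $v^*v = 1$), equipped with the time evolution $\sigma_t$ fixing $C(\GKab)$ pointwise and satisfying $\sigma_t(v) = q_K^{it} v$, and with $\GKab$ acting by translation on the first tensor factor. A direct application of the KMS condition to the generator $v$ shows that $\mathcal{T}$ has a unique KMS$_\beta$-state $\omega_\beta$ for each $0 < \beta \leq +\infty$, determined by $\omega_\beta(v^n (v^*)^m) = \delta_{n,m} q_K^{-n\beta}$ (the vacuum state at $\beta = \infty$). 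By the standard description of KMS-states on tensor products with a commutative factor, the extremal KMS$_\beta$-states of $A$ are precisely the states $\delta_g \otimes \omega_\beta$ for $g \in \GKab$, on which $\GKab$ acts freely and transitively by translation, verifying axiom (1). Since the generator of $\sigma_t$ has simple spectrum $\{n \log q_K : n \geq 0\}$, the partition function equals $\sum_{n \geq 0} q_K^{-n\beta} = (1-q_K^{-\beta})^{-1}$, giving axiom (2).

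For the arithmetic subalgebra, for each $x \in \Kab$ define $\hat x \in C(\GKab)$ by $\hat x(g) = g(x)$, using the fixed embedding $\Kab \subset \C$. Let $A_0$ be the $K$-subalgebra of $A$ generated by $\{\hat x \otimes 1 : x \in \Kab\}$ together with $1 \otimes v$ and $1 \otimes v^*$. Conditions (b) and (c) of axiom (3) are then immediate: the KMS$_\infty$-state $\delta_g \otimes \omega_\infty$ annihilates $v^n (v^*)^m$ unless $n = m = 0$, so its value on $A_0$ equals $\{g(x) : x \in \Kab\} = \Kab$; and since $\GKab$ is abelian and $h \cdot (\hat x \otimes 1) = \widehat{h(x)} \otimes 1$, the equivariance condition $\varphi(h \cdot a) = h(\varphi(a))$ reduces to $g(h(x)) = h(g(x))$.

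The main obstacle is condition (a), density of $A_0 \otimes_K \C$ in $A$: since $\Kab \subset \C$ need not be stable under complex conjugation, the family $\{\hat x\}$ is not obviously closed under complex conjugation and complex Stone--Weierstrass does not apply directly. The plan is to invoke the Peter--Weyl theorem. Characters of the compact abelian group $\GKab$ span a dense subspace of $C(\GKab)$, and each such character factors through the Galois group $G = \mathrm{Gal}(L/K)$ of some finite abelian subextension $L \subset \Kab$. The natural map $L \otimes_K \C \to \C^G$, $x \otimes \lambda \mapsto (g \mapsto \lambda\, g(x))$, is a $\C$-algebra isomorphism by Galois theory, so every character $\chi : G \to \C^*$ is a $\C$-linear combination of the restrictions $\hat x|_G$ with $x \in L \subset \Kab$. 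Combined with the density of polynomials in $v, v^*$ in $\mathcal{T}$, this yields density of $A_0 \otimes_K \C$ in $A$ and completes axiom (3).
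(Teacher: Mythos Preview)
Your proof is correct, but it takes a genuinely different route from the paper. The paper constructs the specific system $A_K = C(Y_K)\rtimes\N$ with $Y_K=\O_K\times_{\O_K^*}\GKab$ and an $\N$-action built from the local reciprocity map; it then classifies KMS-states via the groupoid techniques of Laca--Larsen--Neshveyev (Lemmas~\ref{lem1}--\ref{lem3}) and produces the arithmetic subalgebra through the Grothendieck--Galois correspondence (Lemmas~\ref{lema}--\ref{lemb}). Your system $C(\GKab)\otimes\mathcal T$ is a \emph{different} $C^*$-algebra: both contain an ideal $\KK\otimes C(\GKab)$, but the quotients are $C(\hat\Z)\rtimes\Z$ in the paper's case versus $C(\GKab\times\T)$ in yours, so the two are not isomorphic (compare Proposition~\ref{ktheory}). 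Nonetheless, Definition~\ref{loc} only asks for \emph{some} system satisfying the axioms, and yours does.

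What each approach buys: your construction is considerably more elementary---the tensor splitting reduces the KMS analysis to the uniqueness of the Gibbs state on $\mathcal T$, and your Peter--Weyl/normal-basis argument for density is shorter than the inductive-limit argument via $E_{n,m}$. On the other hand, the paper's system is the local analogue of the Ha--Paugam/Yalkinoglu construction for number fields, and it is precisely this form that makes Section~\ref{connection} possible: the $\R$-equivariant $A_K$--$A_\p$ correspondence and the induction of KMS-states rely on the embedding $X_\p\subset X_K$ coming from the balanced-product description. Your tensor-product model has no evident map to or from the global Bost--Connes system, so while it proves Theorem~\ref{main}, it would not support the global--local comparison that is one of the paper's goals.
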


In Section \ref{construction}, we construct a $C^*$-dynamical system and prove that it satisfies the above axioms. 
We construct an $\R$-equivariant $C^*$-correspondence between Bost-Connes systems for number fields and its localizations in Section \ref{connection}, 
and investigate the behaviour of KMS-states. 
The $C^*$-algebraic structure of Bost-Connes systems for local fields is rather easy compared with that of number fields. 
Fundamental invariants, such as $K$-theory and the primitive ideal space, is observed in Section \ref{feature}.

\section{Construction of Bost-Connes system for local fields} \label{construction}

\subsection{The Definition}

In this section, we would like to give an explicit construction of Bost-Connes systems for local fields. 
The construction is similar to the case of number fields, using the local class field theory. 

Let $K$ be a local field of characteristic zero. 
The integer ring of $K$ is denoted by $\O_K$ and the module of $K$ is denoted by $q_K$.  
Set $Y_K = \O_K \times_{\O^*_K} \GKab$. 
Note that $Y_K$ is compact and Hausdorff. 
We can define an action of $\N$ by 
\begin{equation*}
k\cdot [\rho,\alpha] = [\rho \pi^k,[\pi]_K^{-k}\alpha],
\end{equation*}
where $\pi$ is a prime element of $K$, and $[\cdot]_K$ denotes the reciprocity map of $K$. 
This action is obviously well-defined and independent of the choice of a prime element. 

\begin{dfn}
The Bost-Connes $C^*$-algebra for $K$ is the following semigroup crossed product, 
\begin{equation*}
A_K = C(Y_K) \rtimes \N.
\end{equation*}
\end{dfn}

The time evolution on $A_K$ is defined by 
\begin{equation*}
\sigma_t(fv_k) = q_K^{itk}fv_k. 
\end{equation*}

\begin{dfn}
The dynamical system 
$\mathcal{A}_K = (A_K, \sigma_t)$
is called the Bost-Connes system for $K$. 
\end{dfn}

As in the case of number fields, $A_K$ can be written as a full-corner of a group crossed products. 
Let 
\begin{equation*}
X_{K} = K \times_{\O_K^*} \GKab. 
\end{equation*}
Then we can define the action of $\Z$ on $X_K$ in the same way. 
Let $\tilde{A}_K = C_0(X_K) \rtimes \Z$. Then we have $A_K = 1_{Y_K} \tilde{A}_K 1_{Y_K}$ and $1_{Y_K}$ is a full projection. 

Compared to the case of number fields, the $C^*$-algebraic structure of $A_K$ is quite easy, 
so the additional information (i.e., the time evolution) has a crucial role. 
\begin{prop} \label{identical}
If $K$ and $L$ are two local fields of characteristic zero, then $A_K$ is isomorphic to $A_L$ as $C^*$-algebras. 
\end{prop}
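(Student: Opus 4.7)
The plan is to reduce the claim to producing an $\N$-equivariant homeomorphism $Y_K \cong Y_L$; by functoriality of the semigroup crossed product, this yields the required $C^*$-algebra isomorphism $A_K \cong A_L$. The strategy is to exhibit a product decomposition of $(Y_K,T_K)$ that isolates a universal (odometer-type) factor from a purely topological factor whose underlying Cantor set structure is independent of $K$.

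The key input is local class field theory. Fix a uniformizer $\pi$ of $K$; the local reciprocity map gives a topological group isomorphism $\GKab \cong \hat{\Z} \times \O_K^*$ under which $[\pi]$ corresponds to $(1,1)$ and $[u]$ to $(0,u)$ for $u \in \O_K^*$. With this in hand, I would define
\[
\Phi_K \colon \hat{\Z} \times \tilde{X}_K \longrightarrow Y_K, \qquad (z,(k,u)) \mapsto [\pi^k,(z,u)], \quad (z,\infty) \mapsto [0,(z,1)],
\]
where $\tilde{X}_K := (\N \times \O_K^*) \cup \{\infty\}$ is the one-point compactification of $\N \times \O_K^*$ (discrete $\N$ times the compact group $\O_K^*$). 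A direct chase through the quotient topology on $Y_K = \O_K \times_{\O_K^*} \GKab$ shows $\Phi_K$ is a homeomorphism, and the dynamics $[\rho,\alpha] \mapsto [\pi\rho, [\pi]^{-1}\alpha]$ becomes the product $S \times R_K$, where $S(z)=z-1$ on $\hat{\Z}$ and $R_K$ shifts $(k,u) \mapsto (k+1,u)$ while fixing $\infty$.

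Given this decomposition, the conclusion is immediate. The unit group $\O_K^*$ is a compact, metrizable, totally disconnected group without isolated points, hence is a Cantor set; the same holds for $\O_L^*$. Any homeomorphism $\psi \colon \O_K^* \to \O_L^*$ extends fiberwise to a homeomorphism $\tilde{X}_K \to \tilde{X}_L$ that manifestly intertwines $R_K$ and $R_L$, and the product with $\mathrm{id}_{\hat{\Z}}$ yields the required $\N$-equivariant homeomorphism $Y_K \to Y_L$, hence the isomorphism $A_K \cong A_L$.

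The main obstacle I anticipate is verifying continuity of $\Phi_K$ and $\Phi_K^{-1}$ at the points $\Phi_K(z,\infty) = [0,(z,1)]$. One has to unwind that a basic neighborhood of $[0,(z_0,1)]$ in $Y_K$ is the image under the quotient map $\O_K \times \GKab \to Y_K$ of a set of the form $\pi^n\O_K \times V \times \O_K^*$ with $V$ a clopen neighborhood of $z_0$ in $\hat{\Z}$, and that under $\Phi_K^{-1}$ this matches the product-topology neighborhood $V \times (\{\infty\} \cup \bigsqcup_{k \geq n}\{k\} \times \O_K^*)$ of $(z_0,\infty)$ in $\hat{\Z} \times \tilde{X}_K$. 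Once this identification of neighborhoods is settled, the remaining steps are formal manipulations of the $\O_K^*$-equivalence relation.
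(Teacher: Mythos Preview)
Your proposal is correct and follows essentially the same route as the paper: use local class field theory to identify $\GKab$ with $\hat{\Z}\times\O_K^*$, split $Y_K$ into its nonzero part $\N\times\hat{\Z}\times\O_K^*$ and its zero part $\hat{\Z}$, describe the $\N$-action in these coordinates, and conclude via the Cantor-set homeomorphism $\O_K^*\cong\O_L^*$. Your packaging of this as a genuine product $(\hat{\Z},S)\times(\tilde{X}_K,R_K)$, together with the explicit check of the one-point-compactification topology at the zero locus, makes the gluing step transparent; the paper leaves that topological verification implicit.
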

\begin{proof}
By looking at the valuation, we have the decomposition $K = (\Z \times \O_K^*) \cup \{0\}$ and $\O_K = (\N \times \O_K^*) \cup \{0\}$. 
By the local class field theory, $\GKab$ can be identified with $\hat{\Z} \times \O_K^*$, where $\hat{\Z}$ is the profinite completion of $\Z$. 
So the non-zero part is identified with $\N \times \hat{\Z} \times \O_K^*$ and zero-part is identified with $\hat{\Z}$. 
Under this decomposition, the action of $\N$ is described as follows: 
\begin{eqnarray*}
&&n(k,x,b) = (k+n,x-n,b) \mbox{ for } n \in \Z, k \in \N, x \in \hat{\Z}, b \in \O_K^* \mbox{ on } \N \times \hat{\Z} \times \O_K^*, \\ 
&&nx = x-n \mbox{ for } x \in \hat{\Z} \mbox{ on } \hat{\Z}. 
\end{eqnarray*}
The space $\O_K^*$ is just a Cantor set as a topological space. Hence the dynamical system $(Y_K,\N)$ is conjugate to $(Y_L,\N)$. 
\end{proof}

For $C^*$-dynamical systems, $(A_K,\sigma_t)$ is completely classified with the module $q_K$. 

We would like to prepare several notations. 
Let 
\begin{eqnarray*}
Y_K^* &=& \O_K^* \times_{\O_K^*} \GKab, \\
Y_K^0 &=& \{0\} \times_{\O_K^*} \GKab, \\
Y_K^{\natural} &=& (\O_K \setminus \{0\}) \times_{\O_K^*} \GKab, \mbox{ and} \\ 
X_K^{\natural} &=& K^* \times_{\O_K^*} \GKab. 
\end{eqnarray*}
\subsection{Classification of KMS-states}

We classify KMS-states of $\mathcal{A}_K = (A_K, \sigma_t)$ in this section. 
Note that we distinguish the terminology of KMS$_{\infty}$-states and ground states. 
A ground state is said to be a KMS$_{\infty}$-state if it is written as a limit of KMS$_{\beta}$-states for $\beta \rightarrow \infty$ 
(for the precise definition, see \cite[p.408]{CMb}). 
However, it does not matter in our case because all ground states of the Bost-Connes system are KMS$_{\infty}$-states. 

The proof follows the line of Laca-Larsen-Neshveyev \cite{LLN}. 
We use the groupoid presentation of Bost-Connes $C^*$-algebras (although this may not be crucial --- we use it just to apply Propositions in \cite{LLN}). 
Define a groupoid $\mathcal{G}_K$ by 
\begin{equation*}
\mathcal{G}_K = \{ (k,x) \in \Z \ltimes X_K\ |\ x \in Y_K,\ kx \in Y_K\}. 
\end{equation*} 
Then $C^*_r(\mathcal{G}_K)$ is isomorphic to $A_K$ as $C^*$-algebras. 
Note that $\mathcal{G}_K$ is the restriction of the transformation groupoid $\Z \ltimes X_K$ onto $Y_K$. 
The time evolution is determined by 
\begin{equation*}
\sigma_t(f)(k,x) = q_K^{itk}f(k,x)
\end{equation*}
for $f \in C_c(\mathcal{G}_K)$. 
The conditional expectation $C^*_r(\mathcal{G}_K) \rightarrow C(Y_K)$ is also denoted by $E$. 

We immediately obtain the following lemma by \cite[Proposition 1.1]{LLN}: 
\begin{lem} \label{lem1}
For $0< \beta < \infty$, the set of KMS$_{\beta}$-states on $C^*_r(\mathcal{G}_K)$ is affine isomorphic to the set of 
Radon measures $\mu$ on $X_K$ with $\mu (Y_K) =1$ and satisfying 
\begin{equation*}
\mu(kZ) = q_K^{-\beta k}\mu(Z) 
\end{equation*}
for Borel subsets $Z$. The measure $\mu$ corresponds to the restriction of the state $\mu \circ E$ on $C^*_r(\mathcal{G}_K)$. 
\end{lem}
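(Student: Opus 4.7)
The plan is to derive this lemma as an immediate specialization of \cite[Proposition 1.1]{LLN}. The first step is to verify the hypotheses: $\mathcal{G}_K$, being the restriction of the transformation groupoid $\Z \ltimes X_K$ to the open compact subset $Y_K$, is an étale locally compact Hausdorff groupoid. The continuous $\R$-valued cocycle $c(k,x) = k\log q_K$ satisfies $e^{itc(k,x)} = q_K^{itk}$, so the one-parameter automorphism group of $C^*_r(\mathcal{G}_K)$ it induces is exactly $\sigma_t$.

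Applying the cited proposition then produces a bijection between KMS$_\beta$-states of $(C^*_r(\mathcal{G}_K), \sigma_t)$ and probability measures $\nu$ on the unit space $Y_K$ that are quasi-invariant under the $\mathcal{G}_K$-action with Radon-Nikodym cocycle $e^{-\beta c} = q_K^{-\beta k}$, the KMS-state being recovered from $\nu$ as $\nu \circ E$. Unwound, the quasi-invariance condition is exactly $\nu(kZ) = q_K^{-\beta k}\nu(Z)$ for every Borel $Z \subset Y_K$ and $k \in \Z$ with $kZ \subset Y_K$.

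It remains to translate such measures on $Y_K$ into Radon measures $\mu$ on $X_K$ with $\mu(Y_K) = 1$ and the same scaling relation holding for arbitrary Borel $Z \subset X_K$. One direction is restriction. For the converse, observe that any $x = [\rho, \alpha] \in X_K$ can be brought into $Y_K$ by acting with a sufficiently positive element of $\N$: write $\rho = u\pi^n$ with $u \in \O_K^*$ (or $\rho = 0$) and act by $\max(-n,0)$. Hence $X_K = \bigcup_{k \geq 0}(-k)\cdot Y_K$, and the scaling relation $\mu(kZ) = q_K^{-\beta k}\mu(Z)$ uniquely and consistently extends $\nu$ to a Radon measure on $X_K$, consistency across overlapping translates $(-k)\cdot Y_K$ being immediate from the scaling within $Y_K$ itself. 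I expect no serious obstacle; the only point requiring care is this well-definedness of the extension, which is a direct consequence of the scaling on $Y_K$.
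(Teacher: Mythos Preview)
Your proposal is correct and follows the same approach as the paper, which simply cites \cite[Proposition~1.1]{LLN} without further comment. Your additional unpacking of the cocycle and of the passage between quasi-invariant measures on $Y_K$ and scaling measures on $X_K$ is sound, though note that Proposition~1.1 of \cite{LLN} is already formulated in terms of measures on the ambient space $X$ with the scaling condition and the normalization $\mu(Y)=1$, so the extension step you describe is effectively part of what that proposition packages.
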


From now on, measures are tacitly assumed to be Radon measures. 
For a measure $\mu$ which satisfies the condition in Lemma \ref{lem1}, we have $\mu(Y_K^0)=0$ by the scaling formula. 
Hence $\mu$ is concentrated on $X_K^{\natural}$, and by using the scaling formula again, $\mu$ is completely determined by $\nu=\mu|_{Y_K^*}$. 

\begin{lem}{\rm (cf. \cite[Proposition 1.2]{LLN})} \label{lem2}
The set of KMS$_{\beta}$-states on $C^*_r(\mathcal{G}_K)$ is affine isomorphic to the set of probability measures on $Y_K^*$. 
The probability measure $\nu$ on $Y_K^*$ corresponds to the state $\varphi = \mu \circ E$, where $\mu$ is the measure of $X_K$ satisfying
$\mu|_{Y_K^*}=\nu$ and 
\begin{equation*}
\mu (Z) = (1-q_K^{-\beta})\sum_{k=-\infty}^{\infty} q_K^{-k\beta}\nu ((-k)Z \cap Y_K^*) 
\end{equation*}
for Borel subsets $Z$ of $X_K$. 
\end{lem}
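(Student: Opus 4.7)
The approach is to invoke Lemma \ref{lem1} to reduce the problem to showing that every measure $\mu$ on $X_K$ satisfying the scaling condition $\mu(kZ) = q_K^{-\beta k}\mu(Z)$ together with $\mu(Y_K)=1$ is uniquely encoded by a probability measure $\nu$ on $Y_K^*$ via the stated formula. The first step is to verify that $\mu(Y_K^0)=0$: since the $\N$-action preserves $Y_K^0$, the scaling identity $\mu(k\cdot Y_K^0) = q_K^{-\beta k}\mu(Y_K^0)$ combined with $q_K^{-\beta} \neq 1$ for $\beta>0$ forces this, so $\mu$ is concentrated on $X_K^\natural$.

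Next, I would exploit the valuation to obtain a $\Z$-equivariant disjoint decomposition $X_K^\natural = \bigsqcup_{k\in\Z} k\cdot Y_K^*$, on which the $\Z$-action is free. For any Borel set $Z\subseteq X_K$, breaking $Z \cap X_K^\natural$ along this decomposition and translating each piece back into $Y_K^*$ via the scaling relation gives
\begin{equation*}
\mu(Z) = \sum_{k \in \Z} q_K^{-k\beta}\, \mu|_{Y_K^*}\bigl((-k)Z \cap Y_K^*\bigr).
\end{equation*}
Specializing to $Z = Y_K = Y_K^0 \sqcup \bigsqcup_{k \geq 0} k\cdot Y_K^*$ and summing the geometric series yields $\mu(Y_K) = (1-q_K^{-\beta})^{-1}\mu(Y_K^*)$, so the normalization $\mu(Y_K)=1$ is equivalent to $\mu(Y_K^*) = 1 - q_K^{-\beta}$; setting $\nu := (1-q_K^{-\beta})^{-1}\mu|_{Y_K^*}$ then produces a probability measure on $Y_K^*$ in terms of which the above formula becomes exactly the one stated.

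For the converse, I would take any probability measure $\nu$ on $Y_K^*$, define $\mu$ on $X_K$ by the formula of the lemma, and check that $\mu$ is a well-defined Radon measure satisfying the scaling relation (by a reindexing of the sum) and $\mu(Y_K)=1$ (again a geometric series). Lemma \ref{lem1} then yields a KMS$_\beta$-state, and the two assignments between $\nu$ and $\mu$ are affine and inverse to each other by construction.

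The only delicate point is the disjoint decomposition $X_K^\natural = \bigsqcup_{k\in\Z} k\cdot Y_K^*$ together with the freeness of the $\Z$-action on $X_K^\natural$; both follow from the fact that on $K^*$ the valuation distinguishes different powers of a uniformizer modulo units, so if $k\cdot[\rho,\alpha]=[\rho,\alpha]$ with $\rho\neq 0$ then $\pi^k\in\O_K^*$ and hence $k=0$. Once this is noted, the remainder is a direct transcription of the argument in \cite[Proposition 1.2]{LLN}.
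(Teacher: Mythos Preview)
Your proposal is correct and follows essentially the same approach as the paper. The paper's argument is the short paragraph immediately preceding the lemma: it observes that the scaling formula forces $\mu(Y_K^0)=0$, that $\mu$ is therefore concentrated on $X_K^{\natural}$, and that the scaling formula again determines $\mu$ from its restriction to $Y_K^*$; your write-up simply fills in these steps (the valuation decomposition $X_K^{\natural}=\bigsqcup_{k\in\Z}k\cdot Y_K^*$, freeness, and the geometric-series normalization) in the manner intended by the reference to \cite[Proposition 1.2]{LLN}.
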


Taking the limit of $\beta \rightarrow \infty$, we can see that the summands of $k \neq 0$ disappear. 
This means that $\varphi = \mu \circ E$ is a KMS$_{\infty}$-state for any probability measure $\mu$ on $Y_K^*$. 
In order to see that all ground states are of this form, we can apply \cite[Proposition 1.3]{LLN} for $Y_0 = Y_K^*$. 
We obtain the following lemma: 
\begin{lem} \label{lem3}
The set of ground states of $C^*_r(\mathcal{G}_K)$ is affine isomorphic to 
the set of probability measures on $Y_K^*$. 
The probability measure $\mu$ on $Y_K^*$ corresponds to the state $\varphi = \mu \circ E$. 
\end{lem}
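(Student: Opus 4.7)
The plan is to produce ground states from probability measures on $Y_K^*$ by passing to the limit in Lemma \ref{lem2}, and then to invoke the general classification result \cite[Proposition 1.3]{LLN} to see that these exhaust all ground states.

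For the construction, I would start from a probability measure $\nu$ on $Y_K^*$ and consider the KMS$_\beta$-states $\varphi_\beta = \mu_\beta \circ E$ given by Lemma \ref{lem2}. A direct computation from the extension formula yields $\mu_\beta(j \cdot Y_K^*) = (1 - q_K^{-\beta})\, q_K^{-j\beta}$ for $j \geq 0$, which tends to $1$ for $j = 0$ and to $0$ for $j \geq 1$ as $\beta \to \infty$, while $\mu_\beta(Y_K^0) = 0$ throughout. Consequently $\mu_\beta|_{Y_K}$ converges weak-$*$ to $\nu$, viewed as a Radon measure on $Y_K$ supported on $Y_K^*$, and $\varphi_\beta$ converges to $\varphi := \nu \circ E$. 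Since $\varphi$ is a weak-$*$ limit of KMS$_\beta$-states, it is a KMS$_\infty$-state and in particular a ground state. This also confirms the remark preceding the lemma that every ground state here is automatically a KMS$_\infty$-state.

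For the converse, I would apply \cite[Proposition 1.3]{LLN} to the groupoid $\mathcal{G}_K$ with distinguished subset $Y_0 = Y_K^*$. That proposition characterizes ground states of a reduced groupoid $C^*$-algebra with generalized gauge action by probability measures on a suitable closed subset of the unit space on which the outgoing cocycle is non-negative. Here the choice $Y_0 = Y_K^*$ is dictated by the structure of the $\Z$-action on $X_K$: for any $x \in Y_K^*$ one has $k \cdot x \in Y_K$ exactly when $k \geq 0$, and the cocycle $q_K^{itk}$ attached to such $(k, x) \in \mathcal{G}_K$ satisfies the required positivity condition. The stated affine bijection $\mu \leftrightarrow \mu \circ E$ then follows directly.

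The main obstacle is verifying that the hypotheses of \cite[Proposition 1.3]{LLN} apply to our setup with $Y_0 = Y_K^*$, together with identifying this as the unique correct choice; once this is in place, injectivity of $\mu \mapsto \mu \circ E$ is immediate from faithfulness of the conditional expectation $E$ on the positive part of $C(Y_K)$, and the rest is a routine transcription of the cited result.
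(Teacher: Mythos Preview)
Your proposal is correct and follows essentially the same approach as the paper: take the limit $\beta \to \infty$ in the formula of Lemma~\ref{lem2} to see that $\nu \circ E$ is a KMS$_\infty$-state for every probability measure $\nu$ on $Y_K^*$, and then invoke \cite[Proposition~1.3]{LLN} with $Y_0 = Y_K^*$ to conclude that every ground state arises this way. The paper states this more tersely in the paragraph preceding the lemma, but your additional computations and the check of the hypotheses for $Y_0 = Y_K^*$ are exactly what is needed to flesh out the argument.
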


In summary, we obtain the following proposition: 
\begin{prop} \label{KMS}
We have the following : 
\begin{enumerate} 
\item For $0<\beta<\infty$, we have a one-to-one correspondence between KMS$_{\beta}$-states on $\mathcal{A}_K$ 
and probability measures on $\GKab$. 
The Dirac measure on $w \in \GKab$ corresponds to the extremal KMS$_{\beta}$-state $\varphi_{\beta,w}$ given by 
\begin{equation*}
\varphi_{\beta,w}(f) = (1-q_K^{-\beta})\sum_{k=0}^{\infty} q_K^{-k\beta} f(k \cdot w)  
\end{equation*}
for $f \in C(Y_K)$. 
\item We have a one-to-one correspondence between KMS$_{\infty}$ on $\mathcal{A}_K$ and probability measures on $\GKab$. 
The dirac measure on $w \in G(\Kab/K)$ corresponds the extremal KMS$_{\infty}$-state $\varphi_{\infty,w}$ given by 
\begin{equation*}
\varphi_{\beta,w}(f) = f([1,w]) 
\end{equation*}
for $f \in C(Y_K)$. 
\item For $0<\beta < \infty$, all extremal KMS$_{\beta}$-states are of type I and the partition function is $(1-q_K^{-\beta})^{-1}$. 
\end{enumerate}
\end{prop}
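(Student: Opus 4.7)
The plan is to deduce parts (1) and (2) as essentially direct reformulations of Lemmas \ref{lem2} and \ref{lem3}, using the identification of $Y_K^* = \O_K^* \times_{\O_K^*} \GKab$ with $\GKab$ sending $\alpha \mapsto [1,\alpha]$; under this homeomorphism probability measures on $Y_K^*$ correspond bijectively to probability measures on $\GKab$. Part (3) will require a short computation with the GNS representation.

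For part (1), I would feed a Dirac measure $\delta_w$ into the formula of Lemma \ref{lem2}. The scaling formula produces a measure $\mu$ on $X_K$ supported on the orbit $\{k \cdot [1,w] : k \in \Z\}$, with weight $(1-q_K^{-\beta}) q_K^{-k\beta}$ at each orbit point; normalization then gives $\mu(Y_K) = 1$ automatically since $\sum_{k \geq 0} q_K^{-k\beta} = (1-q_K^{-\beta})^{-1}$. When pairing against $f \in C(Y_K)$, the indices $k < 0$ drop out because $\pi^{k} \notin \O_K$, and the formula of Lemma \ref{lem2} specializes exactly to $\varphi_{\beta,w}(f) = (1-q_K^{-\beta})\sum_{k \geq 0} q_K^{-k\beta} f(k \cdot [1,w])$. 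Extremality of $\varphi_{\beta,w}$ follows because the Dirac measures are the extreme points of the simplex of probability measures. Part (2) is obtained the same way from Lemma \ref{lem3}; the formula $\varphi_{\infty,w}(f) = f([1,w])$ is precisely the $\beta \to \infty$ limit of the sum, where only the $k = 0$ summand survives.

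For part (3), I would realize the GNS representation of $\varphi_{\beta,w}$ concretely on $\ell^2(\N)$ with orthonormal basis $\{e_k\}_{k \geq 0}$: let $v_k$ act by the isometric shift $e_j \mapsto e_{j+k}$, let each $f \in C(Y_K)$ act diagonally by $f \cdot e_k = f(k \cdot [1,w])\, e_k$, and take $e_0$ as the cyclic vector. A direct check on monomials $f v_k$ and $v_k^* f$ shows that $\langle e_0, a\, e_0\rangle = \varphi_{\beta,w}(a)$. The time evolution is implemented by the Hamiltonian $H$ with $H e_k = k(\log q_K)\, e_k$, so $\mathrm{Tr}(e^{-\beta H}) = \sum_{k \geq 0} q_K^{-k\beta} = (1 - q_K^{-\beta})^{-1}$, which is the partition function. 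The representation is of type I because the von Neumann algebra generated by the shift $v_1$ and its adjoint already contains all matrix units of $B(\ell^2(\N))$.

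The only step with any genuine content is verifying the GNS description used in (3); once that is in place, everything reduces to substitution. The identification $Y_K^* \cong \GKab$ carries essentially all the load in (1) and (2), so I do not anticipate any serious obstacle beyond careful bookkeeping of the scaling formula and of which orbit points lie in $Y_K$ versus $X_K \setminus Y_K$.
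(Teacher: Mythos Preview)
Your treatment of parts (1) and (2) matches the paper: both simply unwind Lemmas~\ref{lem2} and~\ref{lem3} through the identification $Y_K^* \cong \GKab$, $[1,\alpha] \leftrightarrow \alpha$, and read off the formulas for Dirac measures.

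There is, however, a genuine error in your argument for part~(3). The representation you describe on $\ell^2(\N)$ is exactly the paper's $\pi_w$, and $e_0$ is indeed cyclic for it, but the vector state $a \mapsto \langle e_0, \pi_w(a) e_0\rangle$ is \emph{not} $\varphi_{\beta,w}$. On $f \in C(Y_K)$ that vector state returns $f([1,w])$, which is $\varphi_{\infty,w}(f)$, not the weighted sum $(1-q_K^{-\beta})\sum_{k\ge 0} q_K^{-k\beta} f(k\cdot w)$. So $\pi_w$ is not the GNS representation of $\varphi_{\beta,w}$ at all; it is the GNS representation of the ground state.

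The fix --- and this is what the paper does --- is to recognize $\varphi_{\beta,w}$ not as a vector state but as the Gibbs state for $\pi_w$: with your Hamiltonian $H e_k = k(\log q_K)e_k$ one checks
\[
\frac{\mathrm{Tr}\bigl(e^{-\beta H}\pi_w(f)\bigr)}{\mathrm{Tr}(e^{-\beta H})}
= (1-q_K^{-\beta})\sum_{k\ge 0} q_K^{-k\beta} f(k\cdot w)
= \varphi_{\beta,w}(f),
\]
and $e^{itH}\pi_w(a)e^{-itH} = \pi_w(\sigma_t(a))$. Since $\pi_w$ is irreducible (your matrix-unit observation is fine for this) and $e^{-\beta H}$ is trace class, the standard Gibbs-state argument shows the GNS representation of $\varphi_{\beta,w}$ is quasi-equivalent to $\pi_w$, hence generates a type~I factor. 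The partition function is then $\mathrm{Tr}(e^{-\beta H}) = (1-q_K^{-\beta})^{-1}$ as you computed. So the ingredients you assembled are the right ones; only the role you assigned to $e_0$ needs to be replaced by the trace formula.
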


\begin{proof}
We have already seen (1)--(3) in Proposition \ref{KMS}, 
and the convergence of $\varphi_{\beta, \mu}$ to $\varphi_{\infty, \mu}$ in norm for every probability measure $\mu$ of $\GKab$. 
So it suffices to prove (4). 

In order to prove that $\varphi_{\beta, w}$ is of type I, 
it suffices to find an irreducible representation $\pi_w:A_K \rightarrow B(\mathcal{H}_w)$ and a self-adjoint operator $H_w$ on $\mathcal{H}_w$ 
such that $e^{itH_w} \pi_w(a) e^{-itH_w} = \pi_w(\sigma_t(a))$ and 
\begin{equation*}
\varphi_{\beta, w} (a) = \frac{\mathrm{Tr}(e^{-\beta H_w}\pi_w(a))}{\mathrm{Tr}(e^{-\beta H_w})} 
\end{equation*}
for $a \in A_K$. 
Define $\pi_w : A_K \rightarrow B(\ell^2\N)$ by 
\begin{eqnarray*}
\pi_w (f) \xi_k &=& f(kw) \xi_k \mbox{ for } f \in C(Y_K) \mbox{ and} \\
\pi_w (v_l) \xi_k &=& \xi_{k+l} \mbox{ for } l \in \N,
\end{eqnarray*}
where $k \in \N$ and $\xi_k$'s are the standard orthonormal basis of $\ell^2\N$. 
Define a self-adjoint operator $H$ on $\ell^2\N$ by 
\begin{equation*}
H \xi_k = (\log q_K^k) \xi_k. 
\end{equation*}
Then we can check 
\begin{equation*}
e^{itH} \pi_w(fv_k) e^{-itH} = q_K^{itk} \pi_w(fv_k) 
\end{equation*}
for $f \in C(Y_K)$ and $k \in \N$, and hence $e^{itH} \pi_w(a) e^{-itH} = \pi_w(\sigma_t(a))$ for any $a \in A_K$. 
In addition, for $f \in C(Y_K)$
\begin{eqnarray*}
\frac{\mathrm{Tr}(e^{-\beta H}\pi_w(f))}{\mathrm{Tr}(e^{-\beta H})}
= (1-q_K^{-\beta})\sum_{k=0}^{\infty} q_K^{-k\beta}f(kw) = \varphi_{\beta,w}(f). 
\end{eqnarray*}
Hence $\varphi_{\beta, w}$ is of type I. 
Moreover, the partition function is $\mathrm{Tr}(e^{-\beta H}) = (1-q_K^{-\beta})^{-1}$. 

\end{proof}

The affine isomorphisms given in Lemma \ref{lem1}--\ref{lem3} are all $\GKab$-equivariant. 
This implies that the action of $\GKab$ on the set of all extremal KMS$_{\beta}$-states is free and transitive. 
Hence Proposition \ref{KMS} implies conditions (1), (2) in Definition \ref{loc}. 

\subsection{The Arithmetic Subalgebra}

We show the existence of the arithmetic subalgebra. The definition of the arithmetic subalgebra is contained in Definition \ref{loc}. 

\begin{prop} \label{arithm}
For any local field $K$ of characteristic zero, its Bost-Connes system $\mathcal{A}_K$ possesses an arithmetic subalgebra. 
\end{prop}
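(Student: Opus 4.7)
The plan is to build $A_0$ using explicit $\Kab$-valued functions on $Y_K$ provided by local class field theory, in the spirit of the classical Bost--Connes arithmetic subalgebra for $\Q$.

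For each $x \in \Kab$, define $f_x \in C(Y_K)$ by
\[
f_x([u, w]) = \begin{cases} w^{-1}\bigl([u]_K^{-1}(x)\bigr), & u \in \O_K^*, \\ 0, & u \in \pi\O_K. \end{cases}
\]
Well-definedness on $Y_K^*$ is forced by abelianness of $\GKab$, and continuity on all of $Y_K$ follows because $x$ lies in a finite subextension (making $f_x$ locally constant on $Y_K^*$) together with the fact that $Y_K^*$ is clopen in $Y_K$. Write $K^{\mathrm{un}}$ for the maximal unramified subextension of $\Kab/K$, whose Galois group is $\GKab / \overline{[\O_K^*]_K} \cong \hat{\Z}$. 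For each $y \in K^{\mathrm{un}}$, define $F_y \in C(Y_K)$ by $F_y([u, w]) = w^{-1}(y)$; this is well-defined because $[u]_K$ fixes $y$ for $u \in \O_K^*$, and continuous on all of $Y_K$ since the formula is independent of $u$. Let $A_0$ be the $K$-subalgebra of $A_K$ generated by $\{f_x : x \in \Kab\} \cup \{F_y : y \in K^{\mathrm{un}}\} \cup \{v_k, v_k^* : k \in \N\}$ (note $1 = F_1 \in A_0$).

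To check (b) and (c), I compute on generators using $\varphi_{\infty, w}(g) = g([1, w])$ for $g \in C(Y_K)$ and the vanishing of $\varphi_{\infty, w}$ on $g v_k$ for $k \geq 1$. This yields $\varphi_{\infty, w}(f_x) = w^{-1}(x) \in \Kab$ and $\varphi_{\infty, w}(F_y) = w^{-1}(y) \in K^{\mathrm{un}}$; as $x$ ranges over $\Kab$ the image exhausts $\Kab$. The $\GKab$-action on $Y_K$ is by translation on the second coordinate, so a direct computation (using that $\GKab$ is abelian) gives $\alpha \cdot f_x = f_{\alpha(x)}$ and $\alpha \cdot F_y = F_{\alpha(y)}$; hence $A_0$ is $\GKab$-stable, and $\varphi_{\infty, w}(\alpha \cdot f_x) = w^{-1}(\alpha(x)) = \alpha(w^{-1}(x)) = \alpha(\varphi_{\infty, w}(f_x))$, as required.

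The main obstacle is the density assertion (a), for which it suffices to show that the image of $A_0 \otimes_K \C$ in $C(Y_K)$ is dense, since the $v_k$'s already lie in $A_0$. For each finite Galois extension $L/K$ with $L \subset \Kab$, the normal basis theorem gives an isomorphism $L \otimes_K \C \xrightarrow{\sim} C(G(L/K), \C)$ sending $x \otimes 1$ to $\sigma \mapsto \sigma^{-1}(x)$, so the $\C$-span of $\{f_x : x \in L\}$ exhausts the locally constant functions on $Y_K^*$ factoring through $G(L/K)$; a union over $L$ produces a uniformly dense subset of $C(Y_K^*)$. Conjugation by $v_k$ then produces functions supported on the clopen set $\pi^k \O_K^* \times_{\O_K^*} \GKab$, and finite sums approximate all of $C_0(Y_K^\natural)$. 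Analogously, the $\C$-span of $\{F_y : y \in K^{\mathrm{un}}\}$ is dense in the subspace of $C(Y_K)$ of functions depending only on the image of $w$ in $\hat{\Z}$. Finally, any $f \in C(Y_K)$ decomposes as $f = \tilde f + (f - \tilde f)$, where $\tilde f([u, w]) := f([0, w])$ belongs to this last subspace (and is continuous by continuity of $f$), while $f - \tilde f$ vanishes on $Y_K^0$ and hence lies in $C_0(Y_K^\natural)$; both terms lie in the closure, completing the proof.
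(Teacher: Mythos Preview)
Your construction is correct and takes a genuinely different route from the paper. The paper defines the arithmetic subalgebra as $E_K \rtimes \N$, where $E_K$ is the $K$-algebra of \emph{all} locally constant $\GKab$-equivariant $\Kab$-valued functions on $Y_K$; it then endows $E_K$ with an inductive-limit structure via $Y_K = \varprojlim Y_{n,m}$ and the Grothendieck--Galois correspondence, identifying each finite stage as $E_{n,m} \cong \prod_{k=0}^m K_{n,m-k}$, so that both density (by the dimension count $\dim_K E_{n,m} = |Y_{n,m}|$) and surjectivity onto $\Kab$ are read off directly from this product decomposition. You instead write down explicit generators $f_x, F_y$ modeled on the classical Bost--Connes construction for $\Q$, prove density hands-on via independence of characters together with the splitting $f = \tilde f + (f - \tilde f)$, and use conjugation by $v_k$ to cover $C_0(Y_K^\natural)$. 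Your argument is more elementary and self-contained; the paper's approach, by contrast, makes the internal structure of the arithmetic subalgebra (as assembled from the tower of abelian extensions $K_{n,m}$) transparent.

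One step deserves tightening: for axioms (b) and (c) you compute only on the generators $f_x, F_y$, but $\varphi_{\infty,w}$ is not multiplicative on $A_0$, so this does not automatically propagate to arbitrary monomials involving $v_k, v_k^*$ (for instance $v_k f_x v_k^*$, which is a function in $C(Y_K)$ but not of the form $f_{x'}$ or $F_{y'}$). The clean fix is to observe that your generators lie in the $K$-subalgebra $\mathcal{E} \subset C(Y_K)$ of locally constant $\Kab$-valued functions satisfying $\alpha\cdot g = \alpha\circ g$ for all $\alpha \in \GKab$, and that $\mathcal{E}$ is stable under both $g \mapsto v_k^* g v_k$ and $g \mapsto v_k g v_k^*$. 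It follows that $A_0$ sits inside the algebraic crossed product of $\mathcal{E}$ by $\N$, whose image under the conditional expectation $E$ lands back in $\mathcal{E}$; since $\varphi_{\infty,w}$ is evaluation at $[1,w]$ composed with $E$, the containment $\varphi_{\infty,w}(A_0) \subset \Kab$ and the intertwining relation (c) then hold for all of $A_0$.
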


The proof follows the line of Yalkinoglu (\cite{Y}), using Grothendieck-Galois correspondence. 
Although the proof could be much simpler, this proof illustrates the structure of the arithmetic subalgebra. 

For $n,m\geq 1$, let 
\begin{equation*}
C_{n,m} = \langle \pi^n \rangle \times U_K^{(m)} = \langle \pi^n \rangle \times (1+\p^m). 
\end{equation*}
By the existence theorem of the class field theory, there exists a finite abelian extension $K_{n,m}$ of $K$ such that 
$K^*/C_{n,m}$ is isomorphic to $G(K_{n,m}/K)$ by the reciprocity map. 
Here and henceforth, we identify $K^*/C_{n,m}$ and $G(K_{n,m}/K)$ via the reciprocity map, and they are denoted by $G_{n,m}$. 

Let 
\begin{equation*}
Y_{n,m} = \O_K/\p^m \times_{\O_K^*/U_K^{(m)}} G_{n,m}. 
\end{equation*}
Note that $\O_K^*/U_K^{(m)} = (\O_K/\p^m)^*$. 

\begin{lem} \label{lema}
We have $\displaystyle Y_K = \lim_{\longleftarrow } Y_{n,m}$. 
\end{lem}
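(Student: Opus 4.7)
The plan is to construct a natural continuous map $\Phi : Y_K \to \varprojlim Y_{n,m}$ and show it is a bijection; since $Y_K$ is compact, $\varprojlim Y_{n,m}$ is Hausdorff, and all $Y_{n,m}$ are finite, such a continuous bijection is automatically a homeomorphism.

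To build $\Phi$ I would start from the surjections $\O_K \twoheadrightarrow \O_K/\p^m$ and $\GKab \twoheadrightarrow G_{n,m}$, the latter obtained from the identification $G_{n,m} \cong G(K_{n,m}/K)$ via the reciprocity map together with the natural restriction. Both maps are equivariant for the relevant $\O_K^*$-action, and the subgroup $U_K^{(m)}$ acts trivially on each target: on $\O_K/\p^m$ because $U_K^{(m)} = 1 + \p^m$, and on $G_{n,m} = K^*/C_{n,m}$ because $U_K^{(m)} \subseteq C_{n,m}$. Consequently the product map $\O_K \times \GKab \to \O_K/\p^m \times G_{n,m}$ descends to a well-defined continuous surjection $p_{n,m} : Y_K \to Y_{n,m}$ on the balanced products. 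Compatibility under refinements $(n,m) \leq (n',m')$ (i.e.\ $n \mid n'$ and $m \leq m'$) is routine, and yields the desired $\Phi$.

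The substantive content is in bijectivity, and both halves reduce to compact-nested-sets arguments. For surjectivity, given a coherent family $([\rho_{n,m}, \alpha_{n,m}]) \in \varprojlim Y_{n,m}$, let $E_{n,m} \subseteq \O_K \times \GKab$ be the preimage of $[\rho_{n,m}, \alpha_{n,m}]$ under the composition $\O_K \times \GKab \to Y_{n,m}$. Coherence of the family forces $E_{n',m'} \subseteq E_{n,m}$ whenever $(n,m) \leq (n',m')$, so the $E_{n,m}$ form a decreasing family of nonempty closed subsets of the compact space $\O_K \times \GKab$; any $(\tilde\rho,\tilde\alpha)$ in the intersection gives a class in $Y_K$ mapping to the prescribed family. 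For injectivity, if $(\rho,\alpha)$ and $(\rho',\alpha')$ project to the same element of every $Y_{n,m}$, the sets
\[
F_{n,m} = \{\, u \in \O_K^* : u \cdot (\rho',\alpha') \equiv (\rho,\alpha) \bmod (\p^m, C_{n,m}) \,\}
\]
form a nested family of nonempty closed subsets of the compact group $\O_K^*$, and any $u$ in the intersection satisfies $u\cdot(\rho',\alpha') = (\rho,\alpha)$ exactly, so $[\rho,\alpha] = [\rho',\alpha']$ in $Y_K$. The main obstacle, if any, is not the topology but the bookkeeping: one must verify that the quotient structure of the balanced product is compatible with the inverse system, which amounts to tracking how the trivial action of $U_K^{(m)}$ interacts with the reciprocity map. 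Once that is in place, the compactness arguments close the proof.
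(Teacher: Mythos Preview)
Your argument is correct. The paper's proof treats the construction of the map and its surjectivity as evident and addresses only injectivity, which it handles by a slightly different mechanism: rather than a nested-closed-sets argument in $\O_K^*$, it observes that the map $\O_K^*/U_K^{(m)} \hookrightarrow G_{n,m}$ is injective (a consequence of local class field theory), so the witness $s_m \in \O_K^*/U_K^{(m)}$ realizing the equality in $Y_{n,m}$ is \emph{unique} for each $m$; the $s_m$ then form a coherent system and assemble to an element $s \in \varprojlim \O_K^*/U_K^{(m)} = \O_K^*$. Your sets $F_{n,m}$ are precisely the preimages of these $s_m$, i.e.\ single $U_K^{(m)}$-cosets, so the two arguments are close cousins: the paper pins down each $F_{n,m}$ exactly and takes a profinite limit, while you only use that they are nonempty, closed, and filtered and invoke compactness. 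Your route is marginally more economical in that it does not explicitly invoke the injectivity of the reciprocity map on units; the paper's route makes that algebraic input visible.
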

\begin{proof}
It is enough to show that the natural map $\displaystyle Y_K \rightarrow \lim_{\longleftarrow } Y_{n,m}$ is injective. 
For $n,m\geq 1$, we denote the image of $[a,\alpha] \in Y_K$ on $Y_{n,m}$ by $[a_m,\alpha_{n,m}]$. 
Let $[a,\alpha],[b,\beta] \in Y_K$ and assume $[a_m,\alpha_{n,m}] = [b_m,\beta_{n,m}]$ for every $n,m\geq 1$. 
Then there exists $s_m \in \O_K^*/U_K^{(m)}$ such that $(a_m,\alpha_{n,m}) = (b_ms_m,[s_m]_K^{-1}\beta_{n,m})$. 
The element $s_m$ is unique for each $m$ since $\O_K^*/U_K^{(m)}\rightarrow G_{n,m}$ is injective, and 
$s_m$ does not depend on $n$ because the following diagram is commutative :
\begin{eqnarray*}
\xymatrix{
 & G_{n,m} \ar[dd] \\
\O_K^*/U_K^{(m)} \ar[ru] \ar[rd] & \\
 & G_{1,m}. }
\end{eqnarray*}
Taking the limit of $s_m$, we have a unique element $\displaystyle s \in \O_K^* = \lim_{\longleftarrow} \O_K/U_K^{(m)}$ such that 
$[a,\alpha] = [bs,[s]_K^{-1}\beta]$. 
\end{proof}

Note that $Y_{n,m}$ is a finite $G_{n,m}$-set. 
We would like to apply the Grothendieck-Galois correspondence theorem to $Y_{n,m}$. 
First, we decompose $Y_{n,m}$ into $G_{n,m}$-orbits. 

\begin{lem} \label{lemb}
We have $Y_{n,m} = \bigsqcup_{k=0}^{m} G_{n,m-k}$. 
Here, $G_{n,m-k}$ is identified with $\pi^k (\O_K/\p^m)^* \times_{(\O_K/\p^m)^*} G_{n,m}$ via 
$\alpha \mapsto [\pi^k,{\overline \alpha}]$, where ${\overline \alpha}$ is a lift of $\alpha$ with respect to 
the quotient map $G_{n,m} \rightarrow G_{n,m-k}$. 
\end{lem}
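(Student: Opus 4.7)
The plan is to decompose $\O_K/\p^m$ into orbits under the multiplicative action of $(\O_K/\p^m)^*$ and then transport this into a decomposition of $Y_{n,m}$. Stratifying by $\p$-adic valuation gives
\begin{equation*}
\O_K/\p^m = \bigsqcup_{k=0}^{m} \pi^k (\O_K/\p^m)^*,
\end{equation*}
where the $k=m$ term is just $\{0\}$ (because $\pi^m\equiv 0 \pmod{\p^m}$). Since the balanced product commutes with disjoint unions in the first factor, this immediately yields
\begin{equation*}
Y_{n,m} = \bigsqcup_{k=0}^{m} \bigl( \pi^k (\O_K/\p^m)^* \times_{(\O_K/\p^m)^*} G_{n,m} \bigr).
\end{equation*}

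Next I would identify the $k$-th piece with $G_{n,m-k}$ via the proposed map $\alpha \mapsto [\pi^k,\overline{\alpha}]$. The crucial point is the stabilizer of $\pi^k$ under the action of $(\O_K/\p^m)^* = \O_K^*/U_K^{(m)}$ on $\O_K/\p^m$: a unit $s$ satisfies $\pi^k s \equiv \pi^k \pmod{\p^m}$ if and only if $s \equiv 1 \pmod{\p^{m-k}}$, i.e.\ the class of $s$ lies in the image of $U_K^{(m-k)}/U_K^{(m)}$. Consequently the surjection $G_{n,m} \to \pi^k (\O_K/\p^m)^* \times_{(\O_K/\p^m)^*} G_{n,m}$ sending $\alpha \mapsto [\pi^k,\alpha]$ has fibers equal to the cosets in $G_{n,m}$ of the image of $U_K^{(m-k)}/U_K^{(m)}$.

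Finally I would identify the corresponding quotient of $G_{n,m}$ with $G_{n,m-k}$. Since $C_{n,m} = \langle \pi^n \rangle \times U_K^{(m)}$ and $U_K^{(m-k)} \supset U_K^{(m)}$, one has $C_{n,m} \cdot U_K^{(m-k)} = \langle \pi^n \rangle \times U_K^{(m-k)} = C_{n,m-k}$, so the quotient of $K^*/C_{n,m}$ by the image of $U_K^{(m-k)}$ is exactly $K^*/C_{n,m-k} = G_{n,m-k}$. This computation also shows that the map $\alpha \mapsto [\pi^k,\overline{\alpha}]$ from $G_{n,m-k}$ is well defined, independent of the choice of lift $\overline{\alpha}$ along $G_{n,m} \twoheadrightarrow G_{n,m-k}$. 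The main point requiring care is the stabilizer calculation together with the identification $C_{n,m}\cdot U_K^{(m-k)} = C_{n,m-k}$; once these are in place, the lemma is a matter of assembling the orbit decompositions.
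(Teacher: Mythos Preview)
Your proposal is correct and follows essentially the same approach as the paper: the paper's proof simply records the product decomposition $G_{n,m} = (\langle \pi \rangle / \langle \pi^n \rangle) \times (\O_K^*/U_K^{(m)})$ and then asserts the isomorphism $\pi^k (\O_K/\p^m)^* \times_{(\O_K/\p^m)^*} G_{n,m} \cong G_{n,m}/U_K^{(m-k)} = G_{n,m-k}$ in one line, whereas you have spelled out the orbit decomposition, the stabilizer computation, and the identification $C_{n,m}\cdot U_K^{(m-k)} = C_{n,m-k}$ that underlie that line.
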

\begin{proof}
Note that $G_{n,m} = (\langle \pi \rangle / \langle \pi^n \rangle) \times ( \O_K^* / U_K^{(m)} )$. 
We have 
\begin{eqnarray*}
\pi^k (\O_K/\p^m)^* \times_{(\O_K/\p^m)^*} G_{n,m} \cong G_{n,m} / U_K^{(m-k)} = G_{n,m-k}. 
\end{eqnarray*}
\end{proof}

\begin{proof}[Proof of Proposition \ref{arithm}]. 
Let $E_K$ be the algebra of $\Kab$-valued locally constant functions on $Y_K$ 
which is $\GKab$-equivariant. 
We show that $E_K \rtimes \N$ is an arithmetic subalgebra. 
In order to show this, we give an inductive limit structure on $E_K$. 
By the Grothendieck-Galois correspondence, There exists a finite \'etale $K$-algebra $E_{n,m}$ such that 
$\mathrm{Hom}(E_{n,m},\Kab)$ is naturally isomorphic to $Y_{n,m}$ as $\GKab$-sets (the action of $\GKab$ factors through $G_{n,m}$). 
Namely, $E_{n,m}$ is given as the algebra of $\GKab$-equivariant $\Kab$-valued functions on $Y_{n,m}$. 
Since $\displaystyle Y_K = \lim_{\longleftarrow } Y_{n,m}$ by Lemma \ref{lema}, we have $\displaystyle E_K = \lim_{\longrightarrow} E_{n,m}$. 

By Lemma \ref{lemb}, we have 
\begin{equation*}
E_{n,m} = \prod_{k=0}^m K_{n,m-k}. 
\end{equation*}
By Proposition \ref{KMS}, extremal KMS$_{\infty}$-states are the evaluation maps at the points of $Y_K^* = \O_K^* \times_{\O_K^*} \GKab$. 
For $w \in Y_K^*$, the image of $w$ on $Y_{n,m}$ is denoted by $w_{n,m}$. 
Then we have the following commutative diagram : 
\begin{eqnarray*}
\xymatrix{ 
E_{n,m} \ar[r] \ar[d]_{\mathrm{ev}_{w_{n,m}}} & E_K \ar[d]_{\mathrm{ev}_w} \\
K_{n,m} \ar[r] & \Kab , }
\end{eqnarray*}
and the left vertical map is the composition of the projection $E_{n,m}\rightarrow K_{n,m}$ and the automorphism coresponding to $w_{n,m} \in G(K_{n,m}/K)$. 
Hence the left vertical map is surjective, and $\varphi_{\infty,w}(E_K) = \bigcup_{n,m} K_{n,m} = \Kab$. 
This shows the axiom (b) in Definition \ref{loc} holds, and the axiom (c) is automatic by $\GKab$-equivariance of functions of $E_{n,m}$. 

Next, we show that $E_K \otimes_K \C$ coincides with the algebra of all locally constant functions on $Y_K$, 
which implies the density of $(E_K \rtimes \N ) \otimes_K \C$ in $A_K$. 
It is enough to show that $E_{n,m} \otimes_K \C = C(Y_{n,m})$. 
Clearly $E_{n,m} \otimes_K \C$ is contained in $C(Y_K)$, and we can calculate the dimension of $E_{n,m}$ as 
\begin{eqnarray*}
\dim E_{n,m} = \sum_{k=0}^m [K_{n,m-k} : K] = \sum_{k=0}^m |G_{n,m-k}| = |Y_{n,m}|. 
\end{eqnarray*}
Comparing the dimension, we have $E_{n,m} \otimes_K \C = C(Y_{n,m})$. 
Therefore, the axiom (a) in Definition \ref{loc} also holds. Hence $E_K \rtimes \N$ is an arithmetic subalgebra. 
\end{proof}

By Proposition \ref{KMS} and \ref{arithm}, the proof of Theorem \ref{main} is completed. 

\section{Connection with Bost-Connes system for number fields} \label{connection}
By the work of  Laca-Neshveyev-Trivkovi\'c \cite{LNT}, 
for any number field $K$ and any finite extension $L$ of $K$,  
there is a natural $\R$-equivariant $C^*$-correspondence between their Bost-Connes systems. 
Similarly, if we consider a localization of a number field with respect to a finite prime, 
there is a natural $C^*$-correspondence from Bost-Connes system for number fields to the local one. 

Let $K$ be a number field and $\p$ be a finite prime of $K$. 
The Bost-Connes systems for $K$ and $K_p$ are denoted by by $A_K$ and $A_{\p}$. 
That is, 
\begin{equation*}
A_K = C(Y_K) \rtimes I_K\mbox{ and }A_{\p} = C(Y_{\p}) \rtimes J_K, 
\end{equation*}
where 
$Y_K = \hat{\O}_K \times_{\hat{\O}^*_K} G(\Kab/K)$ and $I_K$ is the ideal semigroup of $K$. 
Let 
\begin{eqnarray*}
&&X_K = \Af \times_{\hat{\O}^*_K} G(\Kab/K),\ X_{\p} = K_{\p} \times_{\hat{\O}^*_{\p}} G(\Kab_{\p}/K_{\p}),\\ 
&&\tilde{A}_K = C_0(X_K) \rtimes J_K,\ \tilde{A}_{\p} = C_0(X_{\p}) \rtimes \Z. 
\end{eqnarray*}
Then $A_K$ and $A_{\p}$ is full-corners of $\tilde{A}_K$ and $\tilde{A}_{\p}$ respectively. 
In addition, we have a natural inclusion $X_{\p} \subset X_K$ defined by $[\rho,\alpha] \mapsto [(\rho,1),\alpha|_K]$. 
This induces the natural inclusion $Y_{\p} \subset Y_K$. 
The corresponding quotient map $C_0(X_K) \rightarrow C_0(X_{\p})$ is denoted by $\pi$. 

Define the (right) Hilbert $\tilde{A}_{\p}$-module $\tilde{E}_{\p}$ by 
\begin{equation*}
\tilde{E}_{\p} = C^*(J_K) \otimes_{C^*(\Z)} C_0(X_{\p}) \rtimes \Z, 
\end{equation*}
where $\Z$ is identified as the subgroup of $J_K$ generated by $\p$, and the action of $C^*(\Z)$ on $C_0(X_{\p}) \rtimes \Z$ is the natural one. 
Note that $\tilde{E}_{\p}$ is a full Hilbert $\tilde{A}_{\p}$-module. 
We can define the action of $\tilde{A}_K$ on $\tilde{E}_{\p}$ by
\begin{equation*}
(fu_{\a})(u_{\b} \otimes g) = u_{\a \b} \otimes \pi((\a \b)^{-1}.f)g, 
\end{equation*}
where $f \in C_0(X_K)$, $\a,\b \in J_K$ and $g \in C_0(X_{\p})$. Hence $\tilde{E}_{\p}$ defines an $\tilde{A}_K$-$\tilde{A}_{\p}$ correspondence. 

Finally, let $E_{\p} = 1_{Y_K} \tilde{E}_{\p} 1_{Y_{\p}}$. 
Then $E_{\p}$ is a full Hilbert $A_{\p}$-module, and by restricting the action of $\tilde{A}_K$ on $A_K$, $E_{\p}$ defines an $A_K$-$A_{\p}$ correspondence. 

$E_{\p}$ is actually an $\R$-equivariant correspondence. 
Define a one-parameter group of isometries $\tilde{U}_t$ on $\tilde{E}_{\p}$ by 
\begin{equation*}
U_t(v_{\a} \otimes f) = N(\a)^{it} v_{\a} \otimes f,  
\end{equation*}
where $t \in \R, \a \in J_K, f \in C_0(X_{\p})$ and $N$ is the ideal norm. 
Let $U_t$ be the one-parameter group of isometries on $E_{\p}$ obtained by restricting $\tilde{U}_t$ on $E_{\p}$. 
Then we have 
\begin{equation*}
\langle U_t\xi, U_t\eta \rangle = \sigma_t (\langle \xi, \eta \rangle),\ U_ta\xi = \sigma_t(a)U_t\xi,
\end{equation*}
where $\xi,\eta \in E_{\p}$ and $a \in A_K$. 
Hence $(E_{\p}),U_t$ is an $\R$-equivariant correspondence. 

By using $C^*$-correspondence, we can consider the {\it induction of KMS-states} (cf. \cite{LNT}, \cite{LN}). 
If $(E,U)$ is an $\R$-euivariant $(A,\sigma_t^A)$-$(B,\sigma_t^B)$-correspondence between two $C^*$-dynamical systems and 
if $\varphi$ is a KMS$_{\beta}$-weight of $(B,\sigma_t^B)$, $\Ind_E^U \varphi$ denotes the composition of the homomorphism 
$A \rightarrow \BB(E)$ and the naturally induced KMS$_{\beta}$-weight of $\BB(E)$. 
In the case of extensions of number fields, the induction of KMS-states behaves compatibly with the natural inclusion 
$K \hookrightarrow L$ (although the temperature changes --- 
we have to modify the time evolution in order to make $C^*$-correspondence $\R$-equivariant between extensions). 
A similar phenomenon happens in the case of localizations and the method is the same as \cite{LNT}.  

Although the form of $E_{\p}$ in the above is better to comprehend, we need another description of the $C^*$-correspondence. 
Let 
\begin{equation*}
Z_{\p} = J_K \times_{\Z} X_{\p},
\end{equation*}
where $\Z$ is identified with $\p^\Z$ as usual. 
Then there are two natural maps $X_{\p} \hookrightarrow Z_{\p}$ which embeds $X_{\p}$ to the image of $\{1\} \times X_{\p}$
 and $Z_{\p} \rightarrow X_K$ which sends $[\a,z]$ to $\a z$. Both maps are equivariant for appropriate groups, 
 and we can see that $1_{X_{\p}} C_0(Z_{\p}) \rtimes J_K 1_{X_{\p}}$ is naturally isomorphic to $\tilde{A}_{\p}$, 
so $C_0(Z_{\p}) \rtimes J_K 1_{X_{\p}}$ is an $\tilde{A}_K$-$\tilde{A}_{\p}$-correspondence. This is in fact an $\R$-equivariant correspondence if we define
 the $\R$-action $U$ on $C_0(Z_{\p}) \rtimes J_K 1_{X_{\p}}$ in the usual way. 

\begin{lem}\rm{(cf. \cite[Lemma 4.2]{LNT})} 
The $\tilde{A}_K$-$\tilde{A}_{\p}$-correspondence $C_0(Z_{\p}) \rtimes J_K 1_{X_{\p}}$ is $\R$-equivariantly isomorphic to $\tilde{E}_{\p}$. 
\end{lem}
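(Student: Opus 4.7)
The plan is to exhibit the obvious natural map and verify it is an isomorphism of $\R$-equivariant correspondences, mimicking \cite[Lemma 4.2]{LNT} in the localization setting. Define
\[
\Phi \colon \tilde{E}_\p \longrightarrow C_0(Z_\p)\rtimes J_K \cdot 1_{X_\p}, \qquad u_\a \otimes g \longmapsto u_\a g,
\]
where on the target $u_\a$ denotes the canonical isometry in $C_0(Z_\p)\rtimes J_K$, and $g \in \tilde{A}_\p$ is viewed via the identification $\tilde{A}_\p \cong 1_{X_\p}\bigl(C_0(Z_\p)\rtimes J_K\bigr)1_{X_\p}$ stated in the lemma's setup.

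First I would verify well-definedness on the balanced tensor product. Since $J_K$ is the free abelian monoid on primes, $u_{\a\p^n} = u_\a u_{\p^n}$ in $C^*(J_K)$, and under the identification above the image of the generator of $\Z \subset \tilde{A}_\p$ agrees with (the restriction of) $u_\p$. Hence $\Phi(u_{\a\p^n}\otimes g) = u_\a u_{\p^n} g = \Phi(u_\a \otimes u_{\p^n} g)$, so $\Phi$ descends to $C^*(J_K)\otimes_{C^*(\Z)}\tilde{A}_\p$. Right $\tilde{A}_\p$-linearity is immediate, and the $\tilde{A}_\p$-valued inner product is preserved by a routine calculation. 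For the left $\tilde{A}_K$-action: on the target side, $fu_\a \in \tilde{A}_K$ acts by left multiplication after pulling back $f$ along $Z_\p \to X_K$, so $(fu_\a)\cdot u_\b g = fu_{\a\b}g = u_{\a\b}\bigl((\a\b)^{-1}.f\bigr)g$; since $g$ is supported on $X_\p \subset Z_\p$, only the restriction of $(\a\b)^{-1}.f$ to $X_\p$ contributes, yielding $u_{\a\b}\pi((\a\b)^{-1}.f)g$, which matches the formula for the action on $\tilde{E}_\p$. $\R$-equivariance is immediate from $\sigma_t(u_\a) = N(\a)^{it}u_\a$ on both sides.

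For the isomorphism, fix a transversal $\mathcal{T}$ for $J_K/\p^\Z$ consisting of ideals coprime to $\p$; then $\{u_\a\otimes 1 : \a \in \mathcal{T}\}$ is a free right $\tilde{A}_\p$-module basis of $\tilde{E}_\p$. Their images $\{u_\a \cdot 1_{X_\p}\}_{\a\in\mathcal{T}}$ are supported on pairwise disjoint sheets of $Z_\p$, hence are pairwise orthogonal; and every element of $C_0(Z_\p)\rtimes J_K\cdot 1_{X_\p}$ decomposes through these sheets using the presentation $Z_\p = J_K\times_\Z X_\p$, yielding both injectivity and surjectivity of $\Phi$. The main technical point — though not a deep obstacle — is matching the left $\tilde{A}_K$-actions, where the projection $\pi$ on the $\tilde{E}_\p$ side is exactly the shadow of the support condition $g = g\cdot 1_{X_\p}$ on the target side.
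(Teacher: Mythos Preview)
Your proof is correct and follows essentially the same approach as the paper. The paper writes down the map $u_{\a}\otimes fu_{\p^n}\mapsto(\a.f)u_{\a\p^n}$ (which is your $\Phi$ unwound) together with an explicit inverse $fu_{\a}1_{X_{\p}}\mapsto u_{\a}\otimes\a^{-1}.f$, while you verify bijectivity by exhibiting the orthogonal sheet decomposition of $Z_{\p}$ over a transversal for $J_K/\p^{\Z}$; these are the same computation packaged differently. One harmless slip: $J_K$ is the free abelian \emph{group} on primes, not monoid.
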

\begin{proof}
Define a map $C^*(J_K) \otimes_{C^*(\Z)} C_0(X_{\p}) \rtimes \Z \rightarrow C_0(Z_{\p}) \rtimes J_K 1_{X_{\p}}$ by 
\begin{equation*}
u_{\a} \otimes fu_{\p^n} \mapsto (\a .f) u_{\a \p^n}
\end{equation*}
for $\a \in J_K$, $n \in \Z$ and $f \in C_0(C_{\p})$. This is well defined because we can see that $(\a .f) u_{\a \p^n} 1_{X_{\p}} = (\a .f) u_{\a \p^n}$. 
For the converse map, note that 
$fu_{\a} 1_{X_{\p}}$ is nonzero for $f \in C_0(Z_{\p})$ and $\a \in J_K$ only if $\mathrm{supp} f$ is contained in $\a X_{\p}$. 
So we can define the map $C_0(Z_{\p}) \rtimes J_K 1_{X_{\p}} \rightarrow C^*(J_K) \otimes_{C^*(\Z)} C_0(X_{\p}) \rtimes \Z$ by 
\begin{equation*}
fu_{\a} 1_{X_{\p}} \mapsto u_{\a} \otimes \a^{-1}.f
\end{equation*}
for $f \in C_0(Z_{\p})$ and $\a \in J_K$ such that $fu_{\a} 1_{X_{\p}}$ is nonzero. 
Both maps are indeed $\R$-equivariant homomorphisms for $\tilde{A}_K$-$\tilde{A}_{\p}$-correspondence and they are the converse for each other. 
\end{proof}

If $A$ is a $C^*$-algebra and $p$ is a full projection of $A$, the induction of KMS-state of $pAp$ by the correspondence $Ap$ is relatively easy to describe. 
This is the purpose of the above lemma.  

For a moment, we consider the general case. 
Let $G$ be a countable abelian group acting on a locally compact space $X$. Let $N:G\rightarrow \R_+$ be a group homomorphism. 
Let $A=C_0(X) \rtimes G$ and $\sigma_t$ be a time evolution on $A$ determined by $\sigma_t(fu_g)=N(g)^{it} fu_g$ for $f \in C_0(X)$ and $g \in G$. 
Let $Y$ be a compact open subset of $X$ satisfying $\displaystyle X= \bigcup_{g \in G} gY$ (under this assumption, $p=1_Y \in C_0(X)$ is a full projection). 
The canonical conditional expectation $A\rightarrow C_0(X)$ and its restriction onto $pAp$ are both denoted by $E$. 
The $\R$-actions which are naturally induced on the correspondence $Ap$ and $pA$ are both denoted by $U$. 
The following lemma is observed in the proof of \cite[Proposition 4.5]{LNT}. 

\begin{lem} \label{induction}
Under the above setting, the following holds:
\begin{enumerate}
\item Let $\mu$ be a measure on $Y$ and assume that $\varphi= \mu \circ E$ is a positive KMS$_{\beta}$-functional of $pAp$. 
Let $\psi = \Ind_{Ap}^U \varphi$. Then $\psi = \tilde{\mu} \circ E$, where $\tilde{\mu}$ is the measure on $X$ determined by the following formula: 
\begin{equation*}
\tilde{\mu}(Z \cap gY) = N(g)^{-\beta}\mu(g^{-1}Z \cap Y), 
\end{equation*}
for any Borel subset $Z$ of $X$ and $g \in G$. 
\item Let $\nu$ be a measure on $X$ and assume that $\psi= \nu \circ E$ is a KMS$_{\beta}$-functional of $A$. 
Let $\varphi = \Ind_{pA}^U \psi$. Then $\varphi = \nu|_Y \circ E$. 
\end{enumerate} 
\end{lem}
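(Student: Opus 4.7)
The plan is to verify both formulas by reducing to a computation on the diagonal subalgebra $C_0(X)$ (resp.\ $C(Y)$). Any KMS$_\beta$-functional of the form $\rho \circ E$ is determined by its values on the core, so in each case it suffices to identify the measure on the diagonal produced by the induction and match it against the claimed formula.

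I would handle (2) first, as it is essentially a restriction. The $pAp$-$A$-correspondence $pA$ carries the left action of $pAp$ by multiplication, and for a self-adjoint approximate unit $\{e_\lambda\}\subset pA$ the induction formula collapses to
\[
\Ind_{pA}^U\psi(b) = \lim_\lambda \psi(\langle e_\lambda, be_\lambda\rangle_A) = \psi(b)
\]
for $b\in pAp$, since $\langle \xi, \eta\rangle_A = \xi^*\eta$ and $b$ already lies in $pAp$. Applied to $b = fp$ with $f\in C(Y)$, this gives $\Ind_{pA}^U\psi(fp) = \psi(fp) = \nu(f) = (\nu|_Y \circ E)(fp)$, which proves (2).

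For (1), I would fix a Borel partition $X = \bigsqcup_{g\in G}X_g$ with $X_g \subset gY$; this is possible because $G$ is countable and each $gY$ is open. Decompose $f\in C_0(X)$ as $f = \sum_g f_g$ with $f_g = f\cdot 1_{X_g}$. For each $g$, the element $u_g^* f_g u_g = g^{-1}.f_g$ has support in $g^{-1}X_g \subset Y$ and so lies in $C(Y) \subset pAp$. The vectors $u_gp \in Ap$ and $f_g u_g p\in Ap$ are $U$-analytic, each scaled by $N(g)^{iz}$, and viewing $f_g \in A$ as the rank-one operator $\theta_{f_gu_gp,\,u_gp}$ on $Ap$ (via the identity $f_g = (f_gu_g)(pu_g^*)$), the induction formula of \cite[Proposition 4.5]{LNT} produces
\[
\Ind_{Ap}^U\varphi(f_g) = N(g)^{-\beta}\varphi(u_g^* f_g u_g) = N(g)^{-\beta}\int_Y g^{-1}.f_g\,d\mu,
\]
which is exactly $\int f_g\,d\tilde\mu$ under the rule $\tilde\mu(Z\cap gY) = N(g)^{-\beta}\mu(g^{-1}Z\cap Y)$. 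Summing over $g$ and using the KMS condition on $\varphi$ to check consistency of $\tilde\mu$ on overlaps $gY \cap hY$ completes (1).

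The main technical point I expect to grind through is the analytic-continuation step giving the weight $N(g)^{-\beta}$: one has to verify that $u_gp$ is $U$-analytic with the stated eigenvalue and that the LNT induction formula applies verbatim in the present semigroup-crossed-product setting rather than just the number-field setting of \cite{LNT}. Once this is granted, the remainder is measure-theoretic bookkeeping, with fullness of $p$ ensuring that $\tilde\mu$ on the $gY$'s determines it on $X$.
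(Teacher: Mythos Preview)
The paper does not supply its own proof of this lemma: immediately before the statement it remarks that the result ``is observed in the proof of \cite[Proposition 4.5]{LNT}'' and leaves it at that. Your proposal is essentially a reconstruction of that LNT computation, and the overall strategy --- compute the induced weight on rank-one operators $\theta_{fu_gp,\,u_gp}$, pick up the factor $N(g)^{-\beta}$ from analyticity of $u_gp$, and read off the scaling measure --- is exactly the right one and matches what LNT do.

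Two small points are worth tightening. First, your Borel partition $X=\bigsqcup_g X_g$ produces cutoffs $f_g=f\cdot 1_{X_g}$ that are generally not in $C_0(X)$, so the rank-one identification $f_g=\theta_{f_gu_gp,\,u_gp}$ is not literally available; the clean fix is to work with $f\in C_c(X)$ supported in a single open set $gY$ (these are dense by a continuous partition of unity subordinate to the open cover $\{gY\}$), and then pass to Borel sets at the level of measures. Second, you assert at the outset that it suffices to compute $\psi$ on $C_0(X)$, but this uses that $\psi$ factors through $E$; this is not automatic from the KMS property alone without a freeness hypothesis, but it does fall out of the same rank-one computation once you note that $\Ind_{Ap}^U\varphi(\theta_{fu_hp,\,u_kp})$ involves $\varphi$ applied to an element of $pAp$ carrying a nontrivial $u_{k^{-1}h}$, which $E$ (and hence $\varphi=\mu\circ E$) kills when $h\neq k$. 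With these adjustments your argument goes through and is in fact more explicit than what the paper records.
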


Note that in the above proposition, induced KMS$_{\beta}$-weights need not be functionals. 

By Lemma \ref{lem2}, there is a natural one-to-one correspondence between 
positive KMS$_{\beta}$-functionals of $A_{\p}$ and finite measures on $Y_{\p}^*$. 
Let $p$ be the rational prime which is below $\p$ and let $f$ be the inertia degree. 
A finite measure $\nu$ on $Y_{\p}^*$ corresponds to the KMS$_{\beta}$-functional $\varphi_{\beta,\nu} = \mu \circ E$, 
where $\mu$ is the measure on $X_{\p}$ satisfying $\mu|_{Y_{\p}} = \nu$ and the scaling formula 
$\mu(kZ) = p^{-f\beta k}\mu(Z)$ for any $k \in \Z$ and any Borel subset $Z\subset X_{\p}$. 
Note that in this correspondence we deleted the normalization factor $1-p^{-f\beta}$, 
so this notation is not compatible with the notation in Proposition \ref{KMS}. 
If $\varphi_{\beta, \nu}$ is a KMS$_{\beta}$-state, then $\nu$ satisfies $\nu(Y_{\p}^*) = 1-p^{-f\beta}$. 

Since we have a similar correspondence in the case of number fields (cf.\cite{LLN}), we use the same notation for KMS-states of $A_K$. 

The following proposition says that the induction of KMS-states via $E_{\p}$ behaves compatibly 
with the natural homomorphism $G(\Kab_{\p}/K_{\p})\rightarrow G(\Kab/K)$: 

\begin{prop}
Let $i_{\p}: Y_{\p}^* \rightarrow Y_{K}^*$ be the inclusion map. 
Let $\varphi=\varphi_{\beta,\nu}$ be a KMS$_{\beta}$-state of $A_{\p}$, and let $\Phi = \Ind_{E_{\p}}^U \varphi$. 
Then we have the following: 
\begin{enumerate}
\item If $\beta > 1$, then $\Phi = \varphi_{\beta,i_{\p *}(\nu)}$. In particular, $\Phi(1)=\zeta_K(\beta)(1-p^{-f\beta})$. 
\item If $\beta \leq 1$, then $\Phi(1) = + \infty$. 
\end{enumerate}
\end{prop}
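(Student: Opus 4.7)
The plan is to exploit the alternative description $\tilde E_\p \cong C_0(Z_\p) \rtimes J_K \cdot 1_{X_\p}$ from the preceding lemma, and compute the induction explicitly via (a weight version of) Lemma \ref{induction}(1). Since $E_\p = 1_{Y_K}\tilde E_\p 1_{Y_\p}$, the induction factors through induction from $\tilde A_\p$ to $\tilde A_K$ followed by restriction to the full corner $A_K = 1_{Y_K}\tilde A_K 1_{Y_K}$.

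By Lemma \ref{lem2}, the state $\varphi = \varphi_{\beta,\nu}$ corresponds to the measure $\mu$ on $X_\p$ with $\mu|_{Y_\p^*} = \nu$ and scaling $\mu(\p^k Z) = p^{-f\beta k}\mu(Z)$; in particular $\nu(Y_\p^*) = 1-p^{-f\beta}$ and $\mu(Y_\p) = 1$. Extending $\varphi$ to the weight $\mu \circ E$ on $\tilde A_\p$ and applying Lemma \ref{induction}(1) with $X = Z_\p$, $Y = X_\p$, $G = J_K$, and $N$ the ideal norm, the induced measure on $Z_\p$ is $\tilde\mu(\hat Z \cap \a X_\p) = N(\a)^{-\beta}\mu(\a^{-1}\hat Z \cap X_\p)$. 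Its push-forward along the $J_K$-equivariant map $Z_\p \to X_K$, $[\a,z] \mapsto \a z$, using the decomposition $J_K = \p^\Z \times J_K^{(\p)}$ (with $J_K^{(\p)}$ the fractional ideals coprime to $\p$), is
\begin{equation*}
\lambda = \sum_{\a \in J_K^{(\p)}} N(\a)^{-\beta}\,\a_*\mu.
\end{equation*}
Restricting to the corner $Y_K$ and using that $\a X_\p \cap Y_K$ equals $\a Y_\p$ when $\a$ is integral at every $\q \neq \p$ and is empty otherwise, I obtain
\begin{equation*}
\lambda|_{Y_K} = \sum_{\a \in I_K^{(\p)}} N(\a)^{-\beta}\,\a_*(\mu|_{Y_\p}).
\end{equation*}

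For part (1), when $\beta > 1$, further restriction to $Y_K^*$ kills every term except $\a = (1)$, since any nontrivial integral ideal coprime to $\p$ sends $Y_\p$ into $Y_K^\natural$; hence $\lambda|_{Y_K^*} = i_{\p*}\nu$. Because the induced functional is automatically KMS, it is determined by this restriction via the number-field analog of Lemma \ref{lem2} (cf.\ \cite{LLN}), giving $\Phi = \varphi_{\beta, i_{\p*}\nu}$; the total mass computes to $\Phi(1) = \sum_{\a \in I_K^{(\p)}} N(\a)^{-\beta}\cdot\mu(Y_\p) = \zeta_K(\beta)(1-p^{-f\beta})$ by the Euler factorization of $\zeta_K$. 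For part (2), the same identity with $\beta \leq 1$ diverges, so $\Phi(1) = +\infty$. The principal obstacle is justifying the weight version of Lemma \ref{induction}(1) needed both to accommodate the intermediate KMS-weight on $\tilde A_\p$ and to handle the divergent total mass in case (2); once that is in place, the rest is a careful bookkeeping exercise using the decomposition of $J_K$ around $\p$.
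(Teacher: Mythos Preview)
Your argument is correct and follows essentially the same route as the paper: factor $E_{\p}$ as $1_{Y_K}\tilde{A}_K \otimes_{\tilde{A}_K} \tilde{E}_{\p} \otimes_{\tilde{A}_{\p}}\tilde{A}_{\p}1_{Y_{\p}}$, use the description $\tilde{E}_{\p}\cong C_0(Z_{\p})\rtimes J_K\,1_{X_{\p}}$, apply Lemma~\ref{induction} at each step, and push the resulting measure forward along $\rho:Z_{\p}\to X_K$. The only cosmetic difference is in the bookkeeping of the final sum: you split $J_K=\p^{\Z}\times J_K^{(\p)}$ and sum $\sum_{\a\in I_K^{(\p)}}N(\a)^{-\beta}\mu(Y_{\p})$ with $\mu(Y_{\p})=1$, whereas the paper sums $\sum_{\a\in I_K}N(\a)^{-\beta}\nu(Y_{\p}^*)$ with $\nu(Y_{\p}^*)=1-p^{-f\beta}$; the Euler factor at $\p$ migrates accordingly and both yield $\zeta_K(\beta)(1-p^{-f\beta})$. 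Your explicit acknowledgment that Lemma~\ref{induction} must be invoked in its weight form (since $X_{\p}\subset Z_{\p}$ is not compact and the intermediate inductions are weights rather than states) is a fair point that the paper leaves implicit.
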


\begin{proof}
Since $E_{\p} = 1_{Y_K}\tilde{A}_K \otimes_{\tilde{A}_K} \tilde{E}_{\p} \otimes_{\tilde{A}_{\p}}\tilde{A}_{\p}1_{Y_{\p}}$, 
we can know $\Phi=\Ind_{E_{\p}}^U \varphi$ by using Lemma \ref{induction} repeatedly. 
We have $\Phi= \varphi_{\beta,\tilde{\mu}}$ for $\tilde{\mu} = \rho_*(\lambda)|_{Y_K}$, where $\rho:Z_{\p}\rightarrow X_K$ is the natural map and 
$\lambda$ is the measure on $Z_{\p}$ satisfying $\lambda|_{Y_{\p}^*}=\nu$ and $\lambda(\a Z) = N_K(\a)^{-\beta}\lambda(Z)$ 
for any $\a \in J_K$ and any Borel subset $Z \subset Z_{\p}$. Since $\lambda$ is concentrated on $J_KY_{\p}^*$, we have 
\begin{eqnarray*}
\rho_*(\lambda)(Y_K) &=& \sum_{\a \in J_K} \lambda (\rho^{-1}(Y_K) \cap \a Y_{\p}^*) \\
&=& \sum_{\a \in I_K} \lambda (\a Y_{\p}^*) \\
&=& \sum_{\a \in I_K} N_K(\a)^{-\beta} \lambda (Y_{\p}^*) \\
&=& \left\{ \begin{array}{ll} + \infty & \mbox{if } \beta \leq 1 \\
                             \zeta_K(\beta)(1-p^{-f\beta}) & \mbox{if } \beta >1. \end{array} \right.
\end{eqnarray*}
If $Z \subset Y_K^*$ be a Borel set, then by a similar computation, we have
\begin{eqnarray*}
\rho_*(\lambda)(Z) = \nu (\rho^{-1}(Z) \cap Y_{\p}^*) = \nu (i_{\p}^{-1}(Z)), 
\end{eqnarray*}
which concludes the proof. 
\end{proof}

\section{$C^*$-algebraic features of local Bost-Connes systems} \label{feature}

\subsection{K-theory}

We can compute $K$-theory of $A_K$. 
As we have seen in Proposition \ref{identical}, the dynamical system $(Y_K^{\natural}, \N)$ is conjugate to $(\N \times \hat{\Z} \times \O_K^*, \N)$. 
The action of $\N$ on $\N \times \hat{\Z} \times \O_K^*$ is $(k,x,a) \mapsto (k+1,x-1,a)$. 
In fact, this is conjugate to the action of $\N$ defined by $(k,x,a) \mapsto (k+1,x,a)$ via the homeomorphism defined by 
\begin{equation*}
\N \times \hat{\Z} \times \O_K^* \rightarrow \N \times \hat{\Z} \times \O_K^*,\ (k,x,a) \mapsto (k,x+k,a). 
\end{equation*}
So the crossed product $P_K=C_0(Y_K^{\natural}) \rtimes \N$ is isomorphic to $\KK \otimes C(\hat{\Z} \times \O_K^*)$. 
The closed subspace $Y_K^0$ is identified with $\hat{\Z}$, and the quotient $A_K/P_K$ is isomorphic to $C(Y_K^0) \rtimes \Z = C(\hat{\Z}) \rtimes \Z$. 
So we have the following exact sequence: 
\begin{equation*}
\xymatrix{
0 \ar[r] & \KK \otimes C(Y_K^*) \ar[r] & A_K \ar[r] & C(\hat{\Z}) \rtimes \Z \ar[r] & 0. 
}
\end{equation*}

Let $B = C(\hat{\Z}) \rtimes \Z$. 
It is known that $K_0(B) = \Q$ and $K_1(B) = \Z$. 
The generator of $K_1$ is the unitary $u_B$ corresponding to $1 \in \Z$, and 
the isomorphism $K_0(B)\rightarrow \Q$ is induced by the unique tracial state of $B$. 

\begin{prop} \label{ktheory}
Let $K$ be a local field of characteristic zero. 
Then $K_1(A_K) = 0$, and there exists an exact sequence 
\begin{equation*}
0 \rightarrow C(Y_K^*,\Z)/\Z1 \rightarrow K_0(A_K) \rightarrow \Q \rightarrow 0,
\end{equation*}
where $1$ is the constant function on $Y_K^*$. 
\end{prop}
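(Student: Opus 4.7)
The plan is to apply the six-term exact sequence in $K$-theory to the short exact sequence
\[
0 \to P_K \to A_K \to B \to 0
\]
displayed just before the statement. The ideal $P_K \cong \KK \otimes C(Y_K^*)$ has $K_0(P_K) = C(Y_K^*,\Z)$ (using that $Y_K^* \cong \hat{\Z} \times \O_K^*$ is totally disconnected, so $K_0$ of its commutative $C^*$-algebra is just locally constant $\Z$-valued functions) and $K_1(P_K) = 0$. For the quotient, the paper records $K_0(B) = \Q$ and $K_1(B) = \Z$, the latter generated by $[u_B]$. Once these substitutions are made, both assertions reduce to identifying the index map $\partial : K_1(B) \to K_0(P_K)$.

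To compute $\partial$, the natural candidate for a lift of $u_B$ is the isometry $v = v_1 \in A_K$ associated to $1 \in \N$. This is legitimate because the action of $1$ on $Y_K^0 = \hat{\Z}$ is the bijection $x \mapsto x-1$, so $v$ maps to a unitary in $B = C(\hat{\Z}) \rtimes \Z$, which is precisely $u_B$. Using $vfv^* = f \circ (\text{action of }1)^{-1}$ together with the description of the $\N$-action on $Y_K$, one sees that $v^*v = 1$ and $vv^* = 1_{\pi\O_K \times_{\O_K^*}\GKab}$; equivalently,
\[
1 - vv^* = 1_{Y_K^*} \in C(Y_K) \cap P_K,
\]
since $\O_K \setminus \pi\O_K = \O_K^*$. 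The standard formula for the index map gives $\partial[u_B] = [1 - v^*v] - [1 - vv^*] = -[1_{Y_K^*}]$. Tracing through the Morita isomorphism $P_K \cong \KK \otimes C(Y_K^*)$ from the proof of Proposition \ref{identical} (where $Y_K^\natural$ is conjugated to $\N \times Y_K^*$ with the $\N$-action shifting only the first coordinate), the projection $1_{Y_K^*}$ corresponds to $1_{Y_K^*} \otimes p_0$ with $p_0$ a rank-one projection in $\KK$; under $K_0(P_K) \cong C(Y_K^*,\Z)$ it therefore represents the constant function $1$. Hence $\partial$ is injective with image $\Z \cdot 1$.

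The six-term exact sequence, with $K_1(P_K) = 0$ and $\partial$ injective, now forces $K_1(A_K) = \ker \partial = 0$, while exactness at $K_0(P_K)$ and surjectivity of $K_0(A_K) \to K_0(B)$ (because the other boundary lands in $K_1(P_K) = 0$) produce the short exact sequence
\[
0 \to C(Y_K^*,\Z)/\Z \cdot 1 \to K_0(A_K) \to \Q \to 0.
\]

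The main obstacle is the index map computation, and more precisely two bookkeeping points inside it: verifying that the semigroup-crossed-product isometry $v_1$ descends to the $K_1$-generator $u_B$ (this uses the dichotomy between the $\N$-action on $Y_K^\natural$, which is a non-surjective injection, and on $Y_K^0$, where it is a bijection), and recognizing the projection $1_{Y_K^*} \in P_K$ as the constant function $1$ under the Morita identification $K_0(P_K) \cong C(Y_K^*,\Z)$ — otherwise one cannot conclude that the cokernel of $\partial$ is exactly $C(Y_K^*,\Z)/\Z \cdot 1$.
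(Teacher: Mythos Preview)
Your argument is correct and follows essentially the same route as the paper: both use the six-term sequence for $0 \to P_K \to A_K \to B \to 0$, identify $K_0(P_K)$ with $C(Y_K^*,\Z)$, and compute the index map by lifting $u_B$ to the isometry $v$ and obtaining $\partial[u_B] = -[1_{Y_K^*}]$, the class of the constant function $-1$. The only difference is in how $K_1(A_K)=0$ is obtained: the paper appeals to the Pimsner--Voiculescu sequence for $\tilde{A}_K$ together with Morita equivalence, whereas you extract it directly from the same six-term sequence using $K_1(P_K)=0$ and the injectivity of $\partial$; both are equally valid and short.
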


\begin{proof}
We can easily see $K_1(\tilde{A}_K) = 0$ by using the Pimsner-Voiculescu exact sequence. 
So we have $K_1(A_K)=0$ because $A_K$ is Morita equivalent to $\tilde{A}_K$. 
By using six-term exact sequence, we have
\begin{equation*}
\xymatrix{
0 \ar[r] & K_1(B) \ar[r]^{\delta} & K_0(P_K) \ar[r] & K_0(A_K) \ar[r] & K_0(B) \ar[r] & 0,
} 
\end{equation*}
where $\delta$ is the index map. 
By the above observations, $K_0(P_K)$ is identified with $C(Y_K^*,\Z)$. 
Let $v \in A_K$ be the isometry corresponding to $1 \in \N$. Then the image of $v$ on $B$ coincides with $u_B$. 
Hence we have 
\begin{equation*}
\delta([u_B]_1) = [1-v^*v]_0 - [1-vv^*]_0 = -[1_{Y_K^*}]_0. 
\end{equation*}
The element $-[1_{Y_K^*}]_0$ of $K_0(P_K)$ is identified with the constant function $-1 \in C(Y_K^*,\Z)$. 
This implies that $\delta(K_1(B)) = \Z 1$. 
\end{proof}

Note that the group $C(Y_K^*,\Z)/\Z1$ is abstractly isomorphic to the free abelian group of infinite rank. 

\subsection{The primitive ideal space}

The primitive ideal space can be determined in the same way as in \cite{T}, \cite{LR} by applying Williams's theorem \cite{W} to $\tilde{A}_K$. 
Since the action of $\Z$ on $X_K$ is free, any primitive ideal is of the form of $I_x = \ker \pi_x$ for $x \in Y_K$, where 
$\pi_x$ is the representation of $A_K$ on $\BB(\ell^2 \N)$ determined by 
\begin{equation*}
\pi_x(f) \xi_k = f(kx),\ \pi_x(v_l)\xi_k = \xi_{k+l}
\end{equation*}
for $f \in Y_K$ and $k,l \in \N$. For $x,y \in Y_K$, we have $I_x = I_y$ if and only if $\overline{\Z x} = \overline{\Z y}$ in $X_K$. 
If $x \in Y_K^0$, then $\overline{\Z x} = \overline{\Z y}$ holds if and only if $y \in Y_K^0$. 
If $x \in Y_K^{\natural}$, then there exists a unique element $y \in Y_K^*$ satisfying $\overline{\Z x} = \overline{\Z y}$. 
So by fixing an element of $Y_K^0$, we can identify $\Prim A_K$ with $Y_K^* \cup \{x_0\}$. 
We can see that the ideal $I_{x_0}$ is identical to $P_K = C_0(Y_K^{\natural}) \rtimes \N$. 
So we have the following proposition: 
\begin{prop}
We have 
\begin{equation*}
\Prim A_K = \{ \ker \pi_w\ |\ w \in \GKab \} \cup \{P_K\},
\end{equation*}
where $\pi_w$ for $w \in \GKab$ is the representation that appeared in the proof of Proposition \ref{KMS}. 
\end{prop}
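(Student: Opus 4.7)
The plan is to derive the result from the analogous statement for the group crossed product $\tilde{A}_K = C_0(X_K) \rtimes \Z$, to which $A_K$ is Morita equivalent via the full projection $1_{Y_K}$, and then transfer the answer back. Following the strategy of \cite{T} and \cite{LR}, I would apply Williams's theorem \cite{W} to $\tilde{A}_K$. The hypothesis is freeness of the $\Z$-action on $X_K$, which I would check by observing that $[\pi]_K \in \GKab$ has infinite order: its image under the surjection onto the Galois group of the maximal unramified extension is the Frobenius, which topologically generates $\hat{\Z}$. Williams's theorem then identifies $\Prim \tilde{A}_K$ with the space of $\Z$-orbit closures in $X_K$, the orbit closure of $x$ corresponding to the kernel of the regular representation $\tilde{\pi}_x$.

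The next step is to list the orbit closures. I would split $X_K = X_K^{\natural} \sqcup Y_K^0$. Via local class field theory, $Y_K^0 \cong \GKab / \O_K^* \cong \hat{\Z}$ with $\Z$ acting by translation; since $\Z$ is dense in $\hat{\Z}$ this action is minimal, so $Y_K^0$ is a single closed orbit. For $x = [\rho,\alpha] \in X_K^{\natural}$, as $k \to +\infty$ the valuation of $\rho \pi^k$ tends to $+\infty$ and the sequence $k \cdot x$ accumulates on every point of $Y_K^0$ (again by density of $\Z$ in $\hat{\Z}$), while as $k \to -\infty$ the points escape the locally compact space $X_K$. Hence the orbit closure of $x$ is $\Z \cdot x \cup Y_K^0$; two points of $X_K^{\natural}$ yield the same orbit closure iff they are $\Z$-equivalent, and each such orbit has a unique representative in $Y_K^*$, canonically identified with $\GKab$. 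The orbit closures are thus parameterized by $\GKab \sqcup \{Y_K^0\}$.

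Finally, I would transfer back to $A_K$ by compressing with $1_{Y_K}$. For $x \in Y_K^*$ the compression of the regular representation $\tilde{\pi}_x$ is exactly the representation $\pi_x$ on $\ell^2\N$ from the proof of Proposition \ref{KMS}, so the corresponding primitive ideals are $\ker \pi_w$ for $w \in \GKab$. For the orbit closure $Y_K^0$, the corresponding primitive ideal of $\tilde{A}_K$ is $C_0(X_K^{\natural}) \rtimes \Z$; its compression by $1_{Y_K}$ equals $C_0(Y_K^{\natural}) \rtimes \N = P_K$, equivalently the kernel of the quotient map $A_K \to C(Y_K^0) \rtimes \Z$. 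I expect the main technical obstacle to be the orbit-closure analysis in the noncompact space $X_K$, in particular verifying that the forward orbit of any point of $X_K^{\natural}$ accumulates on \emph{all} of $Y_K^0$ rather than merely part of it; this ultimately rests on density of $\Z$ in $\hat{\Z}$. Once that is in hand, the Morita-equivalence translation and the identification of the ideal corresponding to $Y_K^0$ with $P_K$ are formal.
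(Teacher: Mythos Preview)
Your proposal is correct and follows essentially the same route as the paper: apply Williams's theorem to the free $\Z$-action on $X_K$, classify orbit closures by splitting into $X_K^{\natural}$ and the minimal invariant set $Y_K^0$, and then compress by $1_{Y_K}$ to pass back to $A_K$. The paper argues more tersely and does not spell out the accumulation of forward orbits on all of $Y_K^0$ (that detail is only needed for the topology of $\Prim A_K$, not for the set-level description in the proposition), but otherwise the arguments coincide.
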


It is interesting to compare it with the case of number fields (cf. \cite{T}). 
In the case of number fields, the irreducible representations which arise from the Galois group are all faithful, 
so they give the same point in the primitive ideal space (although they are not unitarily equivalent). 
Contrary to that, such representations for local fields actually give different points in $\Prim A_K$ and the Galois group is embedded in $\Prim A_K$. 
This is mainly because $Y_K^*$ is clopen if $K$ is a local field of characteristic zero and is not open if $K$ is a number field. 
Also, for number fields the space which amounts to the point $P_K$ in the local case is not one point --- it is an infinite dimensional torus. 
This difference is caused by the difference of actions. In the case of number fields, the action of the ideal group on the base space is not free and 
the dual of the isotropy group on the zero part appears in the primitive ideal space. 
For local fields, as we remarked before, the action of $\N$ on $Y_K$ is free. 

At last, we would like to describe the topology of $\Prim A_K$. 
The topology is determined by using the fact that the quotient map $Y_K \rightarrow \Prim A_K$ is an open map. 
Hence the inclusion map $Y_K^* \hookrightarrow \Prim A_K$ is a homeomorphism onto its range. 
The subset $Y_K^* \subset \Prim A_K$ is open and compact, but not closed. 
The only open set which contains $P_K$ is the entire set. 
In order to see this, note that for any $x \in Y_K^*$, the sequence $\{ kx \}_{k=1}^{\infty}$ has a subsequence converging to a point $y$ in $Y_K^0$. 
Let $U$ be an open set which intersects with $Y_K^0$. 
Then for some $k \in \Z$ we have $kU \ni y$ and hence $kU$ contains a point which is equivalent to $x$. 
This implies that the image of $U$ is the entire set.

\section*{Acknowledgments}
This work was supported by the Program for Leading Graduate Schools, MEXT, Japan and JSPS KAKENHI Grant Number 13J01197. 
The author would like to thank to Yasuyuki Kawahigashi for fruitful conversations. 

\bibliographystyle{amsplain} 

\bibliography{ref}

\end{document}